\newcommand{\FF}{{\mathbb{F}}}
\newcommand{\bG} {\mathbf G}
\newcommand{\bT} {\mathbf T}
\newcommand{\cB} {\mathcal B}
\newcommand{\fA} {\mathfrak A}
\newcommand{\fS} {\mathfrak S}
\newcommand{\IBr}{{{\operatorname{IBr}}}}
\newcommand{\Irr}{{{\operatorname{Irr}}}}
\newcommand{\reg}{{{\operatorname{reg}}}}
\newcommand{\SC}{{{\operatorname{sc}}}}
\newcommand{\GL}{\operatorname{GL}}
\newcommand{\PSL}{\operatorname{PSL}}
\newcommand{\SL}{\operatorname{SL}}
\newcommand{\SU}{\operatorname{SU}}
\newcommand{\PSp}{\operatorname{PSp}}
\newcommand{\Sp}{\operatorname{Sp}}
\newcommand{\OO}{\operatorname{O}}
\newcommand{\SO}{\operatorname{SO}}
\newcommand{\Spin}{\operatorname{Spin}}
\newcommand{\HSpin}{\operatorname{HSpin}}
\newcommand{\tw}[1]{{}^{#1}\!}
\let\vhi=\varphi
\let\la=\lambda
\newtheorem{thm}{Theorem}[section]
\newtheorem{lem}[thm]{Lemma}
\newtheorem{cor}[thm]{Corollary}
\newtheorem{prop}[thm]{Proposition}
\newtheorem{thmA}{Theorem}
\newtheorem{conjA}[thmA]{Conjecture}
\theoremstyle{definition}
\newtheorem{rem}[thm]{Remark}
\begin{document}

\title{On Willems' conjecture on Brauer character degrees}
\date{\today}
\author{Gunter Malle}
\address{FB Mathematik, TU Kaiserslautern, Postfach 3049,
  67653 Kaisers\-lautern, Germany.}
\email{malle@mathematik.uni-kl.de}

\keywords{Willems' conjecture, Brauer character degrees, large character degrees of symmetric groups}

\subjclass[2010]{20C20, 20C30, 20C33}

\begin{abstract}
In 2005 Wolfgang Willems put forward a conjecture proposing a lower bound for
the sum of squares of the degrees of the irreducible $p$-Brauer characters of a
finite group $G$.
We prove this conjecture for the prime $p=2$. For this we rely on the recent
reduction of Willems' conjecture to a question on quasi-simple groups by
Tong-Viet. We also verify the conditions of Tong-Viet for certain families of
finite quasi-simple groups and odd primes. On the way we obtain lower bounds for
the number of regular semisimple conjugacy classes in finite groups of Lie type.
\end{abstract}

\thanks{The author gratefully acknowledges support by the SFB TRR 195.}

\maketitle


\section{Introduction}

Let $G$ be a finite group and $\Irr(G)$ the set of its ordinary irreducible
characters. Frobenius showed the classical formula
$$\sum_{\chi\in\Irr(G)}\chi(1)^2=|G|$$
for the dimensions of the irreducible complex representations of $G$.
No analogue of this equation is known in the modular setting, that is, for the
set $\IBr(G)$ of irreducible $p$-Brauer characters of $G$, when $p$ is a prime
dividing $|G|$. In 2005, Willems \cite{Wi05} put forward a conjecture giving a
lower bound in terms of the prime-to-$p$ part $|G|_{p'}$ of the group order:

\begin{conjA}[Willems (2005)]   \label{conj:W}
 Let $G$ be a finite group and $p$ be a prime. Then
 $$|G|_{p'}\le\sum_{\vhi\in\IBr(G)}\vhi(1)^2.$$
\end{conjA}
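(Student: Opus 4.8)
The plan is to bootstrap the general case from the recent reduction of Tong-Viet, which replaces Willems' conjecture for an arbitrary finite group by a condition on the finite quasi-simple groups. I would first recall that reduction and isolate the inequality it imposes on each quasi-simple $S$: for every prime $p$ a Willems-type bound $\sum_{\vhi\in\IBr(S)}\vhi(1)^2\ge|S|_{p'}$, in the strengthened form (compatible with central $p'$-quotients, Clifford theory, and the block theory of normal subgroups) that the inductive machinery requires. Everything then comes down to verifying this bound uniformly across the classification of quasi-simple groups, one prime at a time.

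The basic tool throughout is the elementary lower bound coming from characters of $p$-defect zero: such a $\chi$ reduces to an irreducible Brauer character of the same degree $\chi(1)$, and distinct defect-zero characters give distinct Brauer characters, so $\sum_{\vhi}\vhi(1)^2\ge\sum_{\chi\in\Irr_0(S)}\chi(1)^2$, where $\Irr_0(S)$ denotes the set of $p$-defect-zero characters. For the covers of symmetric and alternating groups this reduces the problem to exhibiting, for each $n$ and each $p$, enough $p$-defect-zero partitions of degree large relative to $(n!)_{p'}$; this is precisely where the input on large character degrees of symmetric groups enters. For the groups of Lie type $G=\bG^F$ in defining characteristic $r$ and a prime $p\ne r$, the defect-zero characters are supplied by Lusztig's Jordan decomposition: a regular semisimple $p'$-class $s$ of the dual group whose centraliser $C_{G^*}(s)$ is a torus of order prime to $p$ gives a defect-zero semisimple character $\chi_s$ of degree $|G^*:C_{G^*}(s)|_{r'}$. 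Summing their squares and bounding the number of such classes from below is exactly the estimate on regular semisimple conjugacy classes announced in the abstract, and I would carry that estimate out and convert it into the desired bound $\sum_\vhi\vhi(1)^2\ge|G|_{p'}$.

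The defining-characteristic case $p=r$ is different, since there the only plentiful defect-zero character is the Steinberg character, and I would replace the defect-zero tool by the identity $|G|_{p'}=\sum_\vhi u_\vhi\,\vhi(1)$, where $u_\vhi=\dim\Phi_\vhi/|G|_p\in\mathbb Z_{>0}$ and $\Phi_\vhi$ is the projective cover of the simple module affording $\vhi$. By Cauchy--Schwarz it then suffices to establish $\sum_\vhi u_\vhi^2\le\sum_\vhi\vhi(1)^2$, which says that the surplus produced by the Brauer characters of large degree (Steinberg, and the simple modules of large restricted highest weight) outweighs the deficit at the small modules such as the trivial one. The sporadic groups, their covers, and the finitely many exceptional multipliers are dispatched by direct computation from the known decomposition matrices and Brauer character tables.

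The main obstacle is making all of this uniform across every family, rank, and prime at once. Groups of large rank or large $q$ are comfortable, because the Steinberg degree $|G|_r$ alone nearly suffices; the difficulty is concentrated in the small groups and small $q$, where cyclotomic factors of $|G|$ can be divisible by $p$ in awkward ways, the tori $C_{G^*}(s)$ fail to be of $p'$-order so that defect-zero characters become scarce, and the generic count of regular semisimple classes degenerates. I expect the bulk of the effort to lie in making the regular-semisimple lower bound effective in these boundary ranges, in controlling the Cauchy--Schwarz inequality in defining characteristic for small groups, and in clearing the resulting short list of exceptional quasi-simple groups by explicit computation.
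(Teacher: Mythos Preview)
Your plan has the right architecture---Tong-Viet's reduction, then case-by-case verification across the classification---and your treatment of the non-defining characteristic Lie-type case via regular semisimple classes and Deligne--Lusztig characters of central defect is exactly what the paper does. But there are two genuine gaps, and together they are why the paper only proves the conjecture for $p=2$ rather than for all primes.

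First, Tong-Viet's reduction does not ask for the global inequality $\sum_{\vhi\in\IBr(S)}\vhi(1)^2\ge|S|_{p'}$; it asks, for each quasi-simple $S$ with $|Z(S)|$ prime to $p$ and for \emph{each faithful} $\theta\in\Irr(Z(S))$, that $\sum_{\vhi\in\IBr(S|\theta)}\vhi(1)^2\ge|S/Z(S)|_{p'}$. Your defining-characteristic argument via $|G|_{p'}=\sum_\vhi u_\vhi\,\vhi(1)$ and Cauchy--Schwarz controls only the full sum over all Brauer characters, not the sum restricted to a single central character. In defining characteristic the Steinberg character lies over the trivial central character, so for a nontrivial faithful $\theta$ you lose the one large-degree summand that makes the global inequality easy, and you are forced to produce large faithful $p$-restricted modules by hand. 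The paper does this via Premet's weight-multiplicity theorem, Steinberg tensoring, and restriction from large subgroups; even so it cannot close the argument for $\Sp_{2n}(p)$ with $n=3,4$ and for odd-characteristic spin and half-spin groups over the prime field. Your Cauchy--Schwarz inequality $\sum u_\vhi^2\le\sum\vhi(1)^2$ is neither known per central character nor obviously true there.

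Second, for the alternating and symmetric groups your plan of using $p$-defect-zero partitions fails already at $p=2$: there are infinitely many $n$ (those with no partition into distinct odd parts of the required shape) for which $\fS_n$ has no $2$-defect-zero character at all. The paper's substitute is a completely different mechanism: it takes specific spin characters of $2.\fS_n$ labelled by staircase-like partitions, invokes Fayers' theorem that these particular characters remain irreducible modulo~$2$, and proves by a Stirling-type estimate that a single such character already has degree exceeding $\sqrt{n!_{2'}}$. That argument is specific to $p=2$; for odd $p$ one must additionally handle the faithful block of $2.\fA_n$ (spin characters) with $Z$ of order~$2$, and the paper explicitly leaves this open. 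So your proposal, as written, does not yield the conjecture for any prime: the $p=2$ alternating case needs a different idea than defect zero, and the odd-$p$ defining-characteristic and alternating cases remain unresolved.
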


Willems \cite{Wi05} points out that his conjecture holds for groups with cyclic
Sylow $p$-subgroups as well as for $p$-solvable groups, and he proves it for
groups of Lie type in defining characteristic~$p$. Here we show:

\begin{thmA}   \label{thm:main}
 Willems' Conjecture~\ref{conj:W} holds for all groups for the prime $p=2$.
\end{thmA}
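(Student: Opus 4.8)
The plan rests on Tong-Viet's reduction theorem (cited in the abstract as the tool we rely on), which reduces Willems' Conjecture for a given prime to a statement about quasi-simple groups $S$. Concretely, to prove the conjecture for all finite groups at $p=2$ it suffices to verify, for every quasi-simple group $S$ with $2 \mid |S|$, a condition of the form
\[
  |S/Z(S)|_{2'}\ \le\ \sum_{\vhi\in\IBr_2(S)} \vhi(1)^2
\]
together with the auxiliary requirements Tong-Viet imposes on fusion/central extensions (these let one patch together the blocks of a general $G$ from those of its simple sections). So the whole of Theorem~\ref{thm:main} becomes a question about the finitely many sporadic groups, the alternating groups, the groups of Lie type in characteristic~$2$, and the groups of Lie type in odd characteristic — all examined only for $p=2$.

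**First**, I would dispose of the easy families. Groups of Lie type in defining characteristic $2$ are handled by Willems' own result (quoted in the excerpt). The $26$ sporadic groups and the Tits group are a finite check against the modular ATLAS: compute $\sum \vhi(1)^2$ over $\IBr_2$ and compare with $|S|_{2'}$; since for $p=2$ the $2$-modular character tables are all known, this is mechanical. That leaves the two genuinely infinite families to worry about for $p=2$: the alternating/symmetric groups $\fA_n$, $\fS_n$, and the groups of Lie type in odd characteristic.

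**Second**, the alternating groups. Here the key phrase in the abstract is "large character degrees of symmetric groups". The strategy is: among the $2$-modular irreducible Brauer characters of $\fS_n$ (equivalently, the $2$-regular partitions of $n$), exhibit enough of large degree so that the sum of squares already exceeds $|\fS_n|_{2'} = n!/2^{\nu_2(n!)}$. Since $|\fS_n|$ is divisible by a very high power of $2$, the prime-to-$2$ part is considerably smaller than $n!$, which gives us room. One natural route is to use that the $2$-modular reduction of a suitable ordinary irreducible $\chi_\lambda$ of large degree stays irreducible (or at least has a large irreducible constituent), so that $\IBr_2(\fS_n)$ contains a character of degree $\ge \chi_{\lambda}(1)$ for some well-chosen $\lambda$; even a single Brauer character of degree $\approx \sqrt{|\fS_n|_{2'}}$ suffices, and classical hook-length estimates for the largest degrees of $\fS_n$ give exactly such bounds. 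For $\fA_n$ one descends from $\fS_n$ (restriction changes degrees by at most a factor $2$), and the Schur covers $2.\fA_n$, $2.\fS_n$ contribute spin Brauer characters whose degrees are again large enough; the small-$n$ exceptions where these asymptotics fail are finitely many and checked directly.

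**Third** — and this is the main obstacle — the groups of Lie type in odd characteristic, for $p=2$. For these I would use the unipotent characters, or more precisely the semisimple ones: the number of irreducible ordinary characters lying in $2$-blocks of defect zero, together with Lusztig's parametrisation, shows that many ordinary degrees survive $2$-modular reduction. The cleanest handle is the remark in the abstract about "lower bounds for the number of regular semisimple conjugacy classes": by a standard block-theoretic argument, the semisimple characters of $G = \bG^F$ in general position (the Lusztig series $\mathcal E(G,s)$ with $s$ regular semisimple, which then consist of a single character of degree $|G:C_{\bG}(s)|_{p'}$ and are of $2$-defect zero when $|C_{\bG}(s)|$ is odd) each contribute an irreducible $2$-Brauer character equal to their reduction. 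Summing their squares gives roughly $\sum_{s}\,|G:C_{\bG}(s)|^2$ over a set of regular semisimple classes with $C_\bG(s)$ of odd order, and one wants this to beat $|G|_{2'}$. Estimating the number of such classes from below — i.e. showing the regular semisimple elements of odd centraliser order are plentiful — is the crux: it requires a uniform count valid across all Lie types, ranks and $q$ odd, with the finitely many genuinely small cases (small rank, small $q$) dispatched by hand or with the help of known character-degree data. Here I expect to lean on Weyl-group combinatorics (counting $F$-stable maximal tori $\bT$ with $N_\bG(\bT)/\bT$-orbits of regular elements of odd order on $\bT^F$) and on crude but robust lower bounds $\sum_s |G:C_\bG(s)|^2 \gg |G|\cdot(\text{number of such }s)$, which reduces everything to the promised counting statement for regular semisimple classes. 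Once that count is in place, together with the alternating-group bound and the finite checks, Tong-Viet's reduction delivers Theorem~\ref{thm:main}.
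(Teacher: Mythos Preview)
Your plan has the right overall architecture (Tong-Viet's reduction plus a case split over the classification), and your treatment of the cross-characteristic case is close in spirit to the paper's. But there is a genuine gap at the very first step: you have mis-stated Tong-Viet's reduction. What must be verified for every quasi-simple $S$ with $Z(S)$ of $p'$-order is not
\[
  |S/Z(S)|_{2'}\ \le\ \sum_{\vhi\in\IBr_2(S)} \vhi(1)^2,
\]
but the much sharper inequality
\[
  |S/Z(S)|_{2'}\ \le\ \sum_{\vhi\in\IBr_2(S\mid\theta)} \vhi(1)^2
  \qquad\text{for \emph{every} faithful }\theta\in\Irr(Z(S)).
\]
This is not an ``auxiliary requirement'' about fusion; it is the substance of the conjecture, and it propagates through every case.

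The most visible casualty is your defining-characteristic step. Willems' Steinberg-character argument only gives the inequality for $\theta=1$; it says nothing about faithful $\theta$ when $Z(S)\ne1$. At $p=2$ this still bites: for instance $\SL_3(4)$, $\SU_3(4)$, $\SL_5(q)$ and $\SU_5(q)$ (with $q=2^f$) have nontrivial odd centre, and one must produce large Brauer characters lying \emph{above a fixed faithful} $\theta$. The paper devotes an entire section (its Propositions~3.1--3.3) to exactly this, working with highest-weight modules and Steinberg's tensor product theorem. Your sentence ``handled by Willems' own result'' is therefore incorrect.

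The same omission affects your cross-characteristic argument. Counting regular semisimple $s$ with $|C_{\bG^*}(s)|$ odd gives many irreducible Deligne--Lusztig characters of $2$-defect zero, but you must also show that these characters are \emph{evenly distributed over the faithful central characters} of each quotient $\bG^F/Z$. The paper does this explicitly by choosing coset representatives of $T^*\cap[\bG^{*F},\bG^{*F}]$ in $T^*$ of order prime to the relevant Zsigmondy prime, so that multiplying by them preserves regularity; without such an argument your count only bounds the total sum over all $\theta$, which is not what Tong-Viet requires.

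For the alternating groups your sketch is plausible but unspecific: you do not name a $\lambda$ or a theorem guaranteeing irreducibility modulo~2. The paper's route is different and rather concrete: it uses \emph{spin} characters of $2.\fS_n$ (which reduce to genuine $2$-Brauer characters of $\fS_n$ since the centre of order~$2$ dies modulo~$2$), invokes Fayers' criterion for when such a spin character remains irreducible in characteristic~$2$, and then carries out an explicit Stirling-type estimate for two particular staircase-like bar partitions. Note also that $2.\fA_n$ need not be treated separately at $p=2$ since its centre has order~$2$; only $\fA_n$ (and the exceptional $3.\fA_6$, $3.\fA_7$) are relevant.
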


Our proof relies on a reduction of (a strengthening of) the conjecture by
Tong-Viet \cite[Prop.~1.1]{TV19} to the case of quasi-simple groups. He shows
that Conjecture~\ref{conj:W} holds for all finite groups at the prime $p$ if the
following conjecture on $p$-Brauer characters is true for all quasi-simple
groups:

\begin{conjA}[Tong-Viet (2019)]   \label{conj:TV}
 Let $G$ be a finite quasi-simple group with centre $Z(G)$ of $p'$-order. Then
 $$|G/Z(G)|_{p'}\le\sum_{\vhi\in\IBr(G|\theta)}\vhi(1)^2$$
 for all faithful characters $\theta\in\Irr(Z(G))$.
\end{conjA}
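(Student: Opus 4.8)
The plan is to invoke Tong-Viet's reduction \cite[Prop.~1.1]{TV19} and then to verify Conjecture~\ref{conj:TV} at the prime $2$ for every finite quasi-simple group $G$. Since such a $G$ then has centre of odd order, the classification of finite simple groups leaves the following cases for $S=G/Z(G)$: alternating groups; groups of Lie type in characteristic~$2$; groups of Lie type in odd characteristic; and sporadic groups, including the Tits group. Throughout, quasi-simple groups of bounded Lie rank, alternating groups of bounded degree, the covering groups arising from exceptional Schur multipliers, and all sporadic groups will be dispatched by direct computation from known decomposition matrices and character table libraries.

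For $G=\fA_n$ the inequality to prove is $|\fA_n|_{2'}\le\sum_{\vhi\in\IBr(\fA_n)}\vhi(1)^2$ (the exceptional covers with centre of odd order, namely $3.\fA_6$ and $3.\fA_7$, being handled computationally). Since $|\fA_n|_{2'}=|\fS_n|_{2'}$ and, in characteristic~$2$, every $\vhi\in\IBr(\fS_n)$ restricts to $\fA_n$ either irreducibly or as a sum of two characters of equal degree, it suffices to exhibit, for all large $n$, a $2$-regular partition $\la$ of $n$ for which the irreducible $2$-modular representation $D^\la$ of $\fS_n$ satisfies $(\dim D^\la)^2\ge 2\,|\fS_n|_{2'}$. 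As $|\fS_n|_{2'}=n!/2^{\,n-s_2(n)}$ is, up to a polynomial factor in $n$, of size $n!/2^n$, while the largest degree of an irreducible ordinary character of $\fS_n$ is $\sqrt{n!}\,e^{-O(\sqrt n)}$, there is considerable slack. I would take $\la$ close to the degree-maximising shape but with distinct parts, and bound $\dim D^\la$ from below --- via modular branching, or via dimension estimates for a suitable family of $2$-regular partitions, or by summing the contributions of such a family. Proving a sufficiently strong lower bound for the largest degree of an irreducible $2$-Brauer character of $\fS_n$ is, I expect, the technical heart of the argument.

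For $G$ of Lie type in defining characteristic~$2$, the case of trivial central character is Willems' conjecture \cite{Wi05} for the simple group $G/Z(G)$, which he has verified; for a faithful $\theta$ one exhibits a simple module of restricted highest weight realising $\theta$ whose dimension is, outside bounded rank, comparable to $|G|_2$, and since $|G|_2^2\ge|G/Z(G)|_{2'}$ whenever the number of positive roots is at least the rank, this case follows. For $G$ of Lie type in odd characteristic I would instead use the semisimple characters $\chi_s\in\Irr(G)$ indexed by semisimple classes $[s]$ of odd order in the dual group $G^*$: for $s$ regular, with $C_{G^*}(s)=T$ a maximal torus, $\chi_s(1)$ equals $[G^*:T]$ up to a power of the (odd) defining characteristic, the characters $\chi_s$ for distinct such classes lie in distinct $2$-blocks of $G$, and each reduces modulo $2$ to an irreducible Brauer character; hence $\sum_{\vhi\in\IBr(G)}\vhi(1)^2\ge\sum_{[s]}\chi_s(1)^2$, the sum over the regular semisimple classes of odd order, and passing to a faithful central character costs only a bounded factor. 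To conclude one needs a sufficiently strong lower bound for the number of such classes: grouping them by the $G$-class of $T$ and estimating, for each $T$, the number of regular elements of $T$ of odd order modulo the relative Weyl group $N_G(T)/T$, one obtains $\sum_{[s]}\chi_s(1)^2\ge|G/Z(G)|_{2'}$ for $q$ not too small, the bounded-rank and small-$q$ groups being treated directly. Carrying out this count uniformly over all classical, exceptional and twisted types --- in particular covering the cases where $q\pm1$ is a power of $2$ --- will be the other main obstacle.
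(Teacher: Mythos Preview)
Your overall architecture matches the paper's: Tong-Viet's reduction, then a case split over the classification, with sporadic groups, exceptional covers and small cases done by table lookup. The Lie-type arguments are close in spirit. For groups in odd defining characteristic the paper, like you, produces many Deligne--Lusztig characters attached to regular semisimple elements; however, rather than summing over all tori and all regular $s$ of odd order, it fixes for each type one or two explicit maximal tori $T$ with $|T|_2=|Z(G)|_2$, so that every such $\pm R_s$ is automatically of \emph{central} $2$-defect and therefore reduces to an irreducible Brauer character without any appeal to basic-set results (your assertion that $\chi_s$ reduces irreducibly for every regular $s$ of odd order is true but requires the Geck--Hiss type basic-set theorem at the bad prime~$2$, which is a nontrivial extra ingredient). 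The paper then only needs a lower bound for $n_\reg(T)$ for these particular tori, obtained via Zsigmondy primes, which is considerably less work than your proposed uniform count over all $T$.

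The one place where your plan genuinely diverges from the paper, and where I think you underestimate the difficulty, is the alternating groups. You propose to bound $\dim D^\lambda$ from below for a $2$-regular $\lambda$ near the degree-maximising shape, via modular branching or similar. No such bound of the required strength is available; lower bounds on $\dim D^\lambda$ are notoriously weak. The paper does something quite different: it takes \emph{ordinary spin characters} of $2.\fS_n$, namely those labelled by the staircase partitions $(4l-3,4l-7,\ldots,1)$ and $(4l-1,4l-5,\ldots,3)$, whose degrees are given explicitly by the bar-length formula, and invokes Fayers' theorem \cite{Fa18} that these particular spin characters remain irreducible on reduction modulo~$2$. The point is that the degree is then an \emph{ordinary} character degree computable in closed form, and the required inequality becomes a Stirling-type estimate. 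This use of spin characters that stay irreducible mod~$2$ is the key idea you are missing for $\fA_n$.
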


Here, for $N\unlhd G$ and $\theta\in\IBr(N)$ we denote by $\IBr(G|\theta)$ the
set of irreducible Brauer characters of $G$ above $\theta$. Also, recall that
a finite group $G$ is \emph{quasi-simple} if $G$ is perfect and $G/Z(G)$ is
simple.   \par
Tong-Viet \cite[Prop.~2.1]{TV19} has verified his conjecture for the
finitely many quasi-simple groups of Lie type with exceptional Schur multiplier,
as well as for all covering groups of sporadic simple groups and for alternating
groups of small degree. We invoke the classification of finite simple groups to
deal with the general case, obtaining a complete answer at least when $p=2$. It
will turn out that in many cases the inequality in Conjecture~\ref{conj:TV} is
already satisfied with one single suitable Brauer character, like for symmetric
groups in characteristic~2 or many groups of Lie type in defining
characteristic, but on the other hand there are groups for which the degrees of
a large number of characters have to be taken into consideration; there is no
absolute upper bound on the number of necessary characters even in the class of
quasi-simple groups.   \par
To show that there are sufficiently many such characters, we are led to derive
lower bounds for the number of conjugacy classes of finite groups of Lie type
containing regular semisimple elements (Proposition~\ref{prop:reg elt}) which
may be of independent interest. As an application we obtain that any simple
group of Lie type in characteristic~$p$ has at least two conjugacy classes of
length divisible by the $p$-part of the group order
(Corollary~\ref{cor:sambale}); this was recently used by Sambale \cite{Sa20}.
\medskip

After collecting some basic observations, we study the finite simple groups
according to their classification, starting with the groups of Lie type. In
Section~\ref{sec:defchar} we suppose that $p$ is the  defining prime; here our
results are only partial for certain types. In Section~\ref{sec:crosschar} we
show Conjecture~\ref{conj:TV} in the non-defining characteristic situation for
all cases. Finally, in Section~\ref{sec:alt} we verify Conjecture~\ref{conj:TV}
for alternating groups for the prime $p=2$.

\section{Preliminary results}

\begin{prop}   \label{prop:normal}
 Let $G$ be a finite group and $N\unlhd G$ with $G/N$ solvable. Assume
 Conjecture~\ref{conj:W} holds for $G$ at the prime $p$. Then it also holds for
 $N$ at $p$.
\end{prop}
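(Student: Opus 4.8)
The plan is to argue by induction, reducing to the case where $G/N$ has prime order $q$. Since $G/N$ is solvable, it has a minimal normal subgroup, so there is $N\trianglelefteq M\trianglelefteq G$ with $M/N$ an elementary abelian $q$-group; if $M\neq G$ then $(G,M)$ and $(M,N)$ both have strictly smaller index and we win by applying the proposition twice, while if $M=G$ then $G/N$ has no proper nontrivial normal subgroup, so being solvable it is cyclic of prime order $q$. Fix a splitting field $k$, algebraically closed of characteristic $p$.

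If $q\neq p$, Clifford theory yields the clean identity $\sum_{\varphi\in\IBr(G)}\varphi(1)^2=q\,\sum_{\psi\in\IBr(N)}\psi(1)^2$: split $\IBr(N)$ into $G$-orbits; a $G$-invariant $\psi$ extends to $G$ (the obstruction lies in $H^2(G/N;k^\times)=k^\times/(k^\times)^q=1$ as $k$ is algebraically closed), and by Gallagher the characters above it are the $q$ twists $\beta\hat\psi$, all of degree $\psi(1)$, contributing $q\psi(1)^2$ on the left and $\psi(1)^2$ on the right; an orbit of size $q$ induces irreducibly to a single character of degree $q\psi(1)$, contributing $q^2\psi(1)^2$ on the left and $q\psi(1)^2$ on the right. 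Combined with $|G|_{p'}=q\,|N|_{p'}$, dividing Willems' inequality for $G$ by $q$ gives it for $N$. (This already proves the proposition whenever $G/N$ is a $p'$-group, with no use of solvability.)

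If $q=p$ the naive comparison fails: now $|G|_{p'}=|N|_{p'}$, but a $G$-orbit of size $p$ in $\IBr(N)$ induces to a single character of $G$ of degree $p\psi(1)$, so
$$\sum_{\varphi\in\IBr(G)}\varphi(1)^2=\sum_{\psi\ G\text{-inv.}}\psi(1)^2+p^2\!\!\sum_{\text{orbits }O}\psi_O(1)^2\ \ge\ \sum_{\psi\in\IBr(N)}\psi(1)^2,$$
the inequality being in the wrong direction and frequently strict — e.g.\ for $G=\fS_5\trianglerighteq N=\fA_5$ at $p=2$ the two $2$-dimensional modules of $\fA_5$ fuse and $\sum_\varphi\varphi(1)^2=33>25=\sum_\psi\psi(1)^2$. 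So I would run a secondary reduction. If $G$ has a nontrivial $p'$-quotient $G/L$, then $L\not\subseteq N$ (otherwise $G/L$ would surject onto $G/N=C_p$), so $LN=G$, $[L:L\cap N]=p$, and $N/(L\cap N)\cong G/L$ is a $p'$-group; by the case $q\neq p$, Willems for $G$ is equivalent to Willems for $L$ and Willems for $N$ to Willems for $L\cap N$, and one concludes by induction on $|G|$. Hence one may assume $G=O^{p'}(G)$, so $G/[G,G]$ is a $p$-group. A symmetric reduction on $N$ — peeling off a $G$-stable normal subgroup of index $p$, which exists whenever $N/[G,G]\neq 1$ since $G$ acts trivially on $N/[G,G]$ — then lets one further assume $N$ is perfect, in which case $N=[G,G]$ and $G/N=C_p$.

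The hard part will be exactly this residual configuration: $G/N=C_p$ with $G=O^{p'}(G)$ and $N=[G,G]$ perfect. There Willems' inequality for $G$ only delivers $|N|_{p'}\le\sum_{\psi\ G\text{-inv.}}\psi(1)^2+p^2\sum_O\psi_O(1)^2$, and one must still establish the genuine bound $\sum_{\psi\in\IBr(N)}\psi(1)^2\ge|N|_{p'}$ (equivalently, the ``$G$-stable'' variant $\sum_{\psi\ G\text{-inv.}}\psi(1)^2\ge|N|_{p'}$). This is where the non-monotonicity of $\sum\varphi(1)^2$ under index-$p$ normal subgroups really bites; I expect that to push it through one has to exploit the structure of the perfect group $N$ (its components, via an analysis of the $G$-stable blocks in the spirit of Fong–Reynolds) or to invoke the instances of Willems' conjecture that are already established.
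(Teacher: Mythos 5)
Your handling of the case $q=[G:M]\ne p$ is correct and is evidently the ``Clifford theory and easy counting argument'' the paper has in mind: for a normal subgroup $M$ of prime index $q\ne p$ one gets the exact identity $\sum_{\varphi\in\IBr(G)}\varphi(1)^2=q\sum_{\psi\in\IBr(M)}\psi(1)^2$, together with $|G|_{p'}=q|M|_{p'}$, and one iterates along a composition series of $G/N$ (this is where solvability is used). Up to that point your proposal and the paper's two-line proof coincide. (Your parenthetical that solvability is not needed when $G/N$ is a $p'$-group is true, but iterating prime-index steps does not literally prove it; for a non-solvable $p'$-quotient one needs the semisimple twisted group algebra of the stabiliser quotient --- a minor point.)

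The genuine gap is that your proposal does not prove the statement: the composition factors of order $p$ are left open, and this is precisely the case the paper relies on, since Proposition~\ref{prop:normal} is applied to $\fA_n\unlhd\fS_n$ at $p=2$, i.e.\ to index $2=p$. Your diagnosis of why the counting breaks there is sound: in characteristic $p$, with $[G:M]=p$, an invariant $\psi\in\IBr(M)$ has a unique Brauer character of $G$ above it, of the same degree, and a length-$p$ orbit induces irreducibly, so $\sum_{\IBr(G)}=A+p^2B$ while $\sum_{\IBr(M)}=A+pB$ and $|G|_{p'}=|M|_{p'}$; hence Willems for $G$ does not descend formally, and your $\fS_5\supset\fA_5$ computation ($33$ versus $25$ against the bound $15$) correctly exhibits the loss of a factor $p$. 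I do not see that the paper's sketch addresses this either --- the counting it invokes works verbatim only for $q\ne p$ --- so you have put your finger on a real subtlety in the source rather than overlooked an easy step; but your secondary reductions (to $G=O^{p'}(G)$ and $N=[G,G]$ perfect) do not close the residual case, so as submitted the argument establishes the proposition only when $p\nmid[G:N]$. For the paper's actual application one can circumvent the proposition: the single $2$-Brauer character of $\fS_n$ constructed in Section~\ref{sec:alt} loses at most a factor $2$ in the sum of squares upon restriction to $\fA_n$, so a factor-two strengthening of the estimate $(*)$ (which the asymptotics absorb, with the small $n$ rechecked directly) gives Conjecture~\ref{conj:TV} for $\fA_n$ at $p=2$; for the general index-$p$ step some genuinely extra input (for instance a relative strengthening in the spirit of Tong-Viet's reduction) appears to be needed.
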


\begin{proof}
Let $N\le M\le G$ be a maximal (normal) subgroup. Then $G/M$ is cyclic of
prime order. Using Clifford theory and an easy counting argument we conclude
that Conjecture~\ref{conj:W} holds for $M$. The general statement thus follows
by induction over a composition series of $G/N$. 
\end{proof}

We will use the following consequence of a result of Kiyota and Wada:

\begin{lem}   \label{lem:cyclic}
 Conjecture~\ref{conj:TV} holds for quasi-simple groups with cyclic Sylow
 $p$-subgroups.
\end{lem}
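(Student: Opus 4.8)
The plan is to prove the inequality of Conjecture~\ref{conj:TV} one $p$-block at a time, using that every $p$-block $B$ of $G$ has cyclic defect group: its defect group $D$ lies inside a Sylow $p$-subgroup of $G$, which is cyclic by hypothesis. The input from Kiyota and Wada is their bound on the largest eigenvalue of the Cartan matrix of a block with cyclic defect group, which can be recast as
$$|D|\cdot\sum_{\varphi\in\IBr(B)}\varphi(1)^2\ \ge\ \sum_{\chi\in\Irr(B)}\chi(1)^2 .$$
Indeed, writing $C$ for the Cartan matrix of $B$ and $\mathbf d$ for the column vector of Brauer character degrees, the left- and right-hand sides are $|D|\,\mathbf d^{\operatorname{tr}}\mathbf d$ and $\mathbf d^{\operatorname{tr}}C\,\mathbf d$ (the latter because $C$ is the Gram matrix of the decomposition matrix and $\chi(1)=\sum_\varphi d_{\chi\varphi}\,\varphi(1)$); so the displayed inequality amounts to $|D|I-C$ being positive semidefinite, i.e.\ to all eigenvalues of $C$ being at most $|D|$, which is exactly what Kiyota--Wada establishes for cyclic defect groups.

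Next I would set up the block bookkeeping. Write $Z:=Z(G)$ and fix a faithful $\theta\in\Irr(Z)$; by hypothesis $p\nmid|Z|$. Since $Z$ is central of $p'$-order, the idempotent $e_\lambda$ is a central idempotent of $kG$ for every $\lambda\in\Irr(Z)$, so each $p$-block $B$ of $G$ lies over a unique $\lambda_B\in\Irr(Z)$ in the sense that $\chi|_Z=\chi(1)\lambda_B$ for all $\chi\in\Irr(B)$ and $\varphi|_Z=\varphi(1)\lambda_B$ for all $\varphi\in\IBr(B)$. Hence $\Irr(G|\theta)$, respectively $\IBr(G|\theta)$, is the disjoint union of the $\Irr(B)$, respectively $\IBr(B)$, over the blocks $B$ with $\lambda_B=\theta$. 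Summing the displayed inequality over these blocks and using $|D|\le|G|_p$ for every defect group $D$, I would obtain
$$|G|_p\cdot\sum_{\varphi\in\IBr(G|\theta)}\varphi(1)^2\ \ge\ \sum_{\chi\in\Irr(G|\theta)}\chi(1)^2 .$$

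To finish, I would evaluate the right-hand sum: the $\theta$-isotypic component $\mathbb CGe_\theta$ has $\mathbb C$-dimension $|G:Z|$ (as $\mathbb CG$ is free of rank $|G:Z|$ over $\mathbb CZ$) and is isomorphic to $\bigoplus_{\chi\in\Irr(G|\theta)}M_{\chi(1)}(\mathbb C)$, so $\sum_{\chi\in\Irr(G|\theta)}\chi(1)^2=|G:Z|$. Dividing by $|G|_p=|G/Z|_p$ (again using $p\nmid|Z|$) yields
$$\sum_{\varphi\in\IBr(G|\theta)}\varphi(1)^2\ \ge\ |G:Z|/|G|_p\ =\ |G/Z|_{p'},$$
which is the assertion of Conjecture~\ref{conj:TV}. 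The only non-elementary ingredient is the Kiyota--Wada eigenvalue bound; the rest is routine block theory together with the standard dimension count for $\mathbb CGe_\theta$. Accordingly, the real difficulty is entirely imported, and the content of the lemma is the observation that this bound, combined with the exact value of $\sum_{\chi\in\Irr(G|\theta)}\chi(1)^2$, already suffices whenever the Sylow $p$-subgroup is cyclic.
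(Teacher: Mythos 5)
Your proof is correct and follows essentially the same route as the paper: the Kiyota--Wada bound for blocks with cyclic defect group (which you rederive from their Cartan-matrix eigenvalue statement, while the paper quotes the dimension form directly), the partition of $\Irr(G|\theta)$ and $\IBr(G|\theta)$ into $p$-blocks over the central $p'$-group $Z(G)$, the identity $\sum_{\chi\in\Irr(G|\theta)}\chi(1)^2=|G:Z(G)|$, and the estimate $|D|\le|G|_p$. The bookkeeping with $|G|_p$ versus the paper's maximal defect $d$ is only a cosmetic difference.
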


\begin{proof}
It was shown by Kiyota and Wada \cite[Prop.~4.7]{KW93} (see also
\cite[Prop.~3.1]{HW07}) that for every $p$-block $B$ of a finite group $G$ with
a cyclic defect group $D$ we have
$$\dim B\le |D|\sum_{\vhi\in\IBr(B)}\vhi(1)^2.$$
Now assume that $G$ is quasi-simple with a cyclic Sylow $p$-subgroup and centre
$Z(G)$ of $p'$-order. Let $\theta\in\Irr(Z(G))$ be faithful. By
\cite[Thm.~(9.2)]{Na98}, for example, there is a
union $\cB$ of $p$-blocks of $G$ with $\Irr(G|\theta)=\bigcup_{B\in\cB}\Irr(B)$
and $\IBr(G|\theta)=\bigcup_{B\in\cB}\IBr(B)$. Let $d$ be the maximal order of
a defect group of any $B\in\cB$. Then, using the above inequality we have
$$|G/Z(G)|_{p'}\le \frac{1}{d}|G/Z(G)|
  =\frac{1}{d}\sum_{\chi\in \Irr(G|\theta)}\chi(1)^2
  =\frac{1}{d}\sum_{B\in\cB}\dim B \le \sum_{\vhi\in\IBr(G|\theta)}\vhi(1)^2,$$
as claimed by Conjecture~\ref{conj:TV}.
\end{proof}

In many cases, we will make use of characters of \emph{$p$-defect zero}, that
is, irreducible characters $\chi$ of a finite group $G$ such that $\chi(1)$
contains the full $p$-part of the group order $|G|$. It is a basic result of
Brauer that these remain irreducible under $p$-modular reduction and thus
furnish irreducible $p$-Brauer characters of the same degree.

\section{Groups of Lie type in defining characteristic}   \label{sec:defchar}

Throughout this section $\bG$ denotes a simple, simply connected linear
algebraic group over the algebraic closure of a finite field and $F:\bG\to\bG$ a
Steinberg endomorphism with finite group of fixed point $\bG^F$.
Any simple group of Lie type can then be obtained as $\bG^F/Z(\bG^F)$ for a
suitable such $\bG$, except for $\tw2F_4(2)'$, for which
Conjecture~\ref{conj:TV} already shown in \cite[Prop.~2.1]{TV19}.

In this section we consider the case where $p$ is the defining characteristic
of $\bG$. Here, Willems observed that the Steinberg character (of $p$-defect
zero) already has large enough degree for Conjecture~\ref{conj:W} to hold. Note,
however, that this result does not imply Conjecture~\ref{conj:TV} for these
groups, since generally there will be more than one block of positive defect.
Here, substantially more work is needed. In fact, it seems that not enough is
currently known about large degree irreducible modular characters in this
situation to derive a complete answer unless $p=2$. We will use the theory of
highest weight representations; a basic introduction can be found for example in
\cite[\S16]{MT}.

\begin{rem}   \label{rem:St}
 Let $\bG$ be simply connected and $F:\bG\to\bG$ a Frobenius endomorphism with
 respect to an $\FF_p$-structure. Recall that any irreducible representation
 of $\bG^F$ over $\overline{\FF_p}$ is the restriction of a highest weight
 representation of $\bG$ with $p$-restricted highest weight. Let
 $\{\psi_i\mid i\in I\}$ be a set of irreducible representations of $\bG$ (and
 hence of $\bG^F$) with restricted highest weights, all lying over the same
 central character $\theta$ of $\bG$ (and hence over the same central character
 of $\bG^F$). Further assume that
 $\sum_{i\in I}\psi_i(1)^2\ge |\bG^F:(\ker\theta)^F|_{p'}$. Let $\chi$ be the
 Steinberg representation of $\bG$ and let $r\ge1$. Then by Steinberg's tensor
 product theorem (see, e.g., \cite[Thm.~16.12]{MT}), all
 $\psi_{i,r}:=\psi_i\otimes\chi^{(1)}\otimes\cdots\otimes\chi^{(r-1)}$ are
 irreducible representations of $\bG^{F^r}$, where $\chi^{(j)}$ denotes the
 $j$th Frobenius twist of $\chi$. Since we have $\chi(1)^2>|\bG^F|_{p'}$,
 $$\sum_{i\in I}\psi_{i,r}(1)^2\ge |\bG^{F^r}:(\ker\theta)^{F^r}|_{p'}.$$
 Thus, if Conjecture~\ref{conj:TV} holds for $\bG^F$, it holds for all
 $\bG^{F^r}$, $r\ge1$.
 In proving Conjecture~\ref{conj:TV} for $\bG^{F^r}$ in the defining
 characteristic, we may therefore restrict attention to the groups defined over
 the prime field whenever convenient.
\end{rem}

Let us first deal with some small rank cases.

\begin{prop}   \label{prop:A2}
 Conjecture~\ref{conj:TV} holds for all central quotients of the groups
 $\SL_2(q)$, $\SL_3(q)$, $\SU_3(q)$, $\SL_4(q)$, $\SU_4(q)$ and  $\Sp_4(q)$ in
 defining characteristic.
\end{prop}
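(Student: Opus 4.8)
The plan is to verify Conjecture~\ref{conj:TV} directly for these small-rank groups by exhibiting, for each relevant faithful central character $\theta$, an explicit set of $p$-restricted highest weight modules lying over $\theta$ whose degrees have sum of squares at least $|G/Z(G)|_{p'}$. By Remark~\ref{rem:St} it suffices to treat the groups over the prime field $\FF_p$ (and, when $p=2$, also over $\FF_4$ for the unitary groups, since $\SU_n(2)$ is not $\bG^{F^r}$ for the split $F$); the general $q=p^r$ case then follows by Frobenius-twisting with powers of the Steinberg module. So the real content is a finite check for each of $\SL_2(p)$, $\SL_3(p)$, $\SU_3(p)$, $\SL_4(p)$, $\SU_4(p)$, $\Sp_4(p)$ and small powers.

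The key steps, carried out type by type, are as follows. First, for each $\bG$ of the listed types I would list the $p$-restricted dominant weights, organise them by the central character of $\bG^F$ they afford (the centre acts on the highest-weight module $L(\lambda)$ through $\lambda$ restricted to $Z(\bG)$), and recall or compute the dimensions $\dim L(\lambda)$ for $p$-restricted $\lambda$ — these are classically known in ranks $\le 3$, and for rank $4$ ($\SL_4$, $\SU_4$, $\Sp_4$) they are available from the tables of Lübeck. Second, for the trivial central character one may invoke the Steinberg module alone: $\St(1)^2 = |\bG^F|_p^2 \ge |G/Z(G)|_{p'}$ comfortably, so those blocks are free. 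Third, for each nontrivial faithful $\theta \in \Irr(Z(\bG^F))$ — these occur only when $p \nmid |Z(\bG)|$, i.e. $p$ odd for $\SL_2, \SL_3, \SU_3, \Sp_4$ and $p \nmid 4$ resp. $p\nmid(q\mp1)$ issues for $\SL_4/\SU_4$ — I would pick a minimal-dimensional module over $\theta$, typically the natural module $L(\omega_1)$ or a symmetric/exterior power, and if its square alone does not reach $|G/Z(G)|_{p'}$, add the next few modules over the same $\theta$ (e.g. $L(\omega_1 + (p-1)\omega_{\text{last}})$-type twists, or $L(2\omega_1)$, etc.) until the bound is met. The inequality to be checked in each case is a comparison of a short sum of squares of polynomials in $p$ against the polynomial $|G/Z(G)|_{p'}$, which is a quotient of $|G|$ by the $p$-part; for large $p$ this is clear by degree considerations, leaving only small primes (roughly $p \le 7$ or so, and $p=2,3$ for the subtler central characters) to be settled by hand or by consulting the decomposition matrices.

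The main obstacle I anticipate is the genuinely small cases: for $\SL_4(p)$ and $\SU_4(p)$ with $p \mid q \mp 1$ the centre is $\mathbb Z/4$ or $\mathbb Z/2$ and a faithful $\theta$ exists only for $p$ odd, but the modules over the "order-4" central character (odd symmetric powers, or $L(\omega_1)$ and $L(\omega_3)$) are few and small, so one must check carefully that $\dim L(\omega_1)^2 + \dim L(\omega_3)^2 + (\text{a twist or two})$ already exceeds $|{\PSL_4(p)}|_{p'}$ — and similarly for $\Sp_4(p)$ with $p$ odd, where the genuinely $2$-dimensional-centre situation forces using the two spin-type modules $L(\omega_1)$ and $L(\omega_2)$ together with a well-chosen third module. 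When the naive short list falls just short for one small prime, the fix is to bring in one more restricted weight whose dimension is known from Lübeck's tables (or, failing that, to exhibit an ordinary character of $p$-defect zero over that central character and use that its reduction stays irreducible). I would expect $p=3$ for $\Sp_4$ and $\SU_4$, and possibly $p=5$ for $\SL_4$, to be the cases requiring the most care; all of these are finite and can be closed by direct computation.
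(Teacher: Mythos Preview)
Your overall framework coincides with the paper's: reduce to the prime field via Remark~\ref{rem:St}, dispose of the trivial central character using the Steinberg module, and then for each faithful $\theta$ exhibit $p$-restricted highest weight modules over $\theta$ whose squared degrees sum to at least $|G/Z(G)|_{p'}$. The difficulty is entirely in this last step, and here your proposal has a genuine gap.

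You plan to use ``a few'' modules over $\theta$, starting from small ones like $L(\omega_1)$, $L(2\omega_1)$ or $L(\omega_1+(p-1)\omega_{n-1})$, and you expect the resulting inequality to be a comparison of a short fixed sum of polynomials in $p$ against $|G/Z(G)|_{p'}$, settled for large $p$ ``by degree considerations''. But the modules you name have dimensions that are far too small: for $\SL_3(p)$, for instance, $\dim L(\omega_1)=3$, $\dim L(2\omega_1)=6$, and $\dim L(\omega_1+(p-1)\omega_2)=p(p+2)$, so any fixed finite collection of such modules contributes at most $O(p^4)$ to the sum of squares, whereas $|{\PSL_3(p)}|_{p'}\sim p^5/3$. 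The same mismatch occurs for $\Sp_4(p)$ and $\SL_4(p)$. So a bounded list of the modules you describe cannot work uniformly in $p$; in fact the paper remarks in the introduction that there is no absolute bound on the number of characters needed. One could try to repair this by using a single near-Steinberg faithful module such as $L(p-1,p-2)$, but then one must know its exact dimension in characteristic $p$ (not just its Weyl dimension), and you do not address this.

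The paper takes a different route at precisely this point. Rather than seeking individual large modules, it sums over \emph{all} $p$-restricted weights lying over the given central character (so roughly $p^{\mathrm{rk}}/|Z|$ modules), and for each such $L(\lambda)$ it extracts a lower bound on $\dim L(\lambda)$ from Premet's theorem \cite{Pr87}: every weight subdominant to $\lambda$ actually occurs in $L(\lambda)$, so counting those weights and their Weyl-orbit lengths gives an explicit polynomial lower bound. For example, for $\SL_3(p)$ one uses the $p-1$ modules $L(i,i-1)$, $1\le i\le p-1$, with the Premet bound $\dim L(i,i-1)\ge 3i^2$, and then $\sum_i(3i^2)^2$ is an elementary sum exceeding $|{\PSL_3(p)}|_{p'}$. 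The analogous counts for $\Sp_4(p)$ and $\SL_4(p)/\SU_4(p)$ are carried out the same way. This weight-counting via Premet is the missing ingredient in your plan.
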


\begin{proof}
Let $G$ be as in the statement. By \cite[Exmp.~2.3(a)]{Wi05} we may restrict our
investigations to the case that $Z(G)\ne1$ and by Remark~\ref{rem:St} we may
assume that $q=p$. For $G=\SL_2(q)$, we may hence assume that $q=p$ is odd. Here
$G$ has a $p$-restricted faithful irreducible Brauer character of degree
$p-1$, and $(p-1)^2\ge(p^2-1)/2=|G/Z(G)|_{p'}$.
\par
Next assume $G=\SL_3(p)$ or $\SU_3(p)$ with $Z(G)$ non-trivial of order~3. For
$1\le i\le p-1$ let $\chi_i$ be the irreducible $p$-Brauer character of $G$ with
(restricted) highest weight $(i,i-1)$. We obtain a lower bound on $\chi_i(1)$ by
adding up the dimensions of certain weight spaces in the corresponding highest
weight module $L(i,i-1)$. According to the result of Premet \cite[Thm.~1]{Pr87},
all weights in the corresponding characteristic~0 irreducible highest weight
module also occur in $L(i,i-1)$. But these are exactly the weights subdominant
to $(i,i-1)$. It is now an easy exercise to determine all these weights and to
find the lengths of their orbits under the Weyl group. This shows that
$\chi_i(1)=\dim L(i,i-1)\ge 3i^2$. Hence
$\sum_{i=1}^{p-1}\chi_i(1)^2\ge (p^2-1)(p^3+1)/3$ by a straightforward
calculation. Thus we conclude, since all characters $\chi_i$ lie above a fixed
faithful character of $Z(\SL_3)$ (see e.g.\ \cite[App.~A.2]{Lue01}).
\par
Next, assume that $G=\Sp_4(p)$. Since $|Z(G)|=(2,p-1)$ we may assume that $p$
is odd. Note that a module with restricted highest weight $(i,j)$ is faithful on
$Z(G)$ if
and only if $j$ is odd. Arguing as in the previous case, one sees that a lower
bound for the dimension of the restricted module with highest weight $(i,2j+1)$
is $2i^2+(8j+6)i+4(j+1)^2$, and the sum of the squares of this over all
$i=0,\ldots,p-1$ and $j=0,\ldots,(p-3)/2$ is larger than $p^6$ and thus larger
than $|G/Z(G)|_{p'}=(p^2-1)(p^4-1)/2$.   \par
Finally, for $G=\SL_4(p)$ or $\SU_4(p)$, lower bounds for the weight space
dimensions in restricted representations with highest weight $(m_1,m_2,m_3)$
with $m_i\le p-1$ and $m_1+2m_2+3m_3\equiv1\pmod4$ (for faithful representations
of $G$), respectively $m_1+2m_2+3m_3\equiv2\pmod4$ (for those with a central
subgroup of order~2 in the kernel), yield a sum of squares at least 
$(p^2-1)(p^3\pm1)(p^4-1)/(4,p\pm1)$.
\end{proof} 

\begin{prop}   \label{prop:gen def}
 Let $G$ be quasi-simple of Lie type in characteristic~$p$, but not a spin,
 half spin or symplectic group, nor of type $\tw{(2)}A_{n-1}$ with $n\le 5$.
 Then Conjecture~\ref{conj:TV} holds for $G$ at the prime~$p$.
\end{prop}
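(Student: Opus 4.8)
The plan is to follow Remark~\ref{rem:St} and reduce to $q=p$. If $Z(\bG^F)=1$, then Conjecture~\ref{conj:TV} for $\bG^F$ is just Conjecture~\ref{conj:W} for $\bG^F$, which Willems established in defining characteristic \cite{Wi05}; this settles $E_8$, $F_4$, $G_2$, $\tw3D_4$, the Suzuki and Ree groups outright, as well as the remaining types whenever the centre happens to be trivial. So assume $Z(\bG^F)\ne1$. For each type not excluded in the statement --- that is, $\tw{(2)}A_{n-1}$ with $n\ge6$, $\tw{(2)}E_6$, or $E_7$ --- the fundamental group $X/\mathbb{Z}\Phi$ of $\bG$ is cyclic, of order $d$ say, so a faithful $\theta\in\Irr(Z(\bG^F))$ corresponds to a coset $\bar c$ of $\mathbb{Z}\Phi$ in $X$. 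By Steinberg's tensor product theorem the $p$-restricted irreducibles $L(\lambda)$ exhaust $\IBr(\bG^F)$, and $L(\lambda)$ lies over $\theta$ exactly when $\lambda\in\bar c$.

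The key point I would use is that for every $p$-restricted dominant $\lambda$ the weight $(p-1)\rho-\lambda$ is again $p$-restricted, and $(p-1)\rho$ occurs with multiplicity one among the weights of $L(\lambda)\otimes L((p-1)\rho-\lambda)$, so the Steinberg module $\mathrm{St}=L((p-1)\rho)$ is a composition factor; hence
$$\dim L(\lambda)\cdot\dim L((p-1)\rho-\lambda)\ \ge\ \dim\mathrm{St}\ =\ |\bG^F|_p\ =\ p^N,\qquad N:=|\Phi^+|.$$
Moreover $\lambda\mapsto(p-1)\rho-\lambda$ carries the $p$-restricted weights over $\theta$ bijectively onto those over the dual character $\bar\theta$, and a graph automorphism (for $A_{n-1}$ and $E_6$) or a field automorphism (for $\tw2A_{n-1}$ and $\tw2E_6$; nothing is needed for $E_7$, where $\bar\theta=\theta$) shows $\sum_{\vhi\in\IBr(\bG^F|\theta)}\vhi(1)^2=\sum_{\vhi\in\IBr(\bG^F|\bar\theta)}\vhi(1)^2$. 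Adding $\dim L(\lambda)^2+\dim L((p-1)\rho-\lambda)^2\ge 2\dim L(\lambda)\dim L((p-1)\rho-\lambda)\ge 2p^N$ over the $p$-restricted $\lambda$ lying over $\theta$ therefore yields
$$\sum_{\vhi\in\IBr(\bG^F|\theta)}\vhi(1)^2\ \ge\ p^N\cdot\#\{\,p\text{-restricted }\lambda\text{ over }\theta\,\}.$$

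It then remains to count the $p$-restricted weights over $\theta$ and compare. Writing $I_0$ for the set of $i$ with $\omega_i\notin\mathbb{Z}\Phi$ and $\ell$ for the rank, a short Gauss-sum estimate over $X/\mathbb{Z}\Phi\cong\mathbb{Z}/d$ --- using $p\equiv1\pmod d$ for the untwisted types and $p\equiv-1\pmod d$ for the unitary ones, together with the fact that $\omega_1$ (resp.\ the relevant small weight) maps to a generator of $\mathbb{Z}/d$ --- bounds the number of $p$-restricted $\lambda$ over $\theta$ below by $\tfrac1d(p^{\ell}-O(p^{\ell-|I_0|+1}))$. On the other hand $|\bG^F/Z(\bG^F)|_{p'}=\tfrac1d\prod_i(p^{d_i}-\varepsilon_i)$ with $\sum_i d_i=N+\ell$ and $\varepsilon_i\in\{\pm1\}$; isolating the factor with $d_i=2$ (and, for the twisted types, pairing it with the factor of degree $3$, whose exponent carries a $+1$) bounds this above by $\tfrac1d(p^{N+\ell}-cp^{N+\ell-2})$ for a suitable $c>0$. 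Multiplying the weight count by $p^N$ and comparing with this upper bound, the required inequality reduces to $|I_0|\ge2$, which holds for all the types and ranks occurring here; a handful of smallest groups, where the two crude bounds are closest, would be checked by hand.

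The step needing the most care is this last numerical comparison, in particular making the Gauss-sum bound sharp enough for the unitary groups $\tw2A_{n-1}$ and $\tw2E_6$, where some of the factors $p^{d_i}-\varepsilon_i$ exceed $p^{d_i}$; that is precisely what the pairing of consecutive factors takes care of. Everything else --- the reduction to $q=p$, Willems' theorem for trivial centre, the tensor-product inequality, the central-character bookkeeping, and the symmetry $\IBr(\bG^F|\theta)\leftrightarrow\IBr(\bG^F|\bar\theta)$ --- is routine. (The spin, half-spin and symplectic groups, and $\tw{(2)}A_{n-1}$ with $n\le5$, are set aside and treated separately --- the cases $n\le4$ are in Proposition~\ref{prop:A2}; for the spin and half-spin groups the above count breaks down, since the centre is either not cyclic or the only fundamental weight outside $\mathbb{Z}\Phi$ is the spin one, so $|I_0|\le1$.)
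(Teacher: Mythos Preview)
Your approach is genuinely different from the paper's. The paper does not count weights; instead it exhibits, for each type, a \emph{single} faithful irreducible $p$-Brauer character whose squared degree already exceeds $|G|_{p'}$. It does this by locating a subgroup of the form $H\times Z(G)$ with $H$ a subsystem subgroup --- $F_4(q)$ inside $\tw{(2)}E_6(q)$, $E_6(q)$ inside $E_7(q)$, $\SL_{n-1}(q)$ or $\SU_{n-1}(q)$ inside $\SL_n(q)$ or $\SU_n(q)$, and $\SO_{2n-1}(q)'$ inside $\SO_{2n}^\pm(q)'$ --- and observing that the Steinberg character of $H$ tensored with a faithful linear character of $Z(G)$ forces $G$ to possess a faithful irreducible Brauer character of at least that degree. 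This is short and sidesteps the Gauss-sum estimate you yourself flag as the delicate point. Your pairing $\lambda\leftrightarrow(p-1)\rho-\lambda$ and the inequality $\dim L(\lambda)\cdot\dim L((p-1)\rho-\lambda)\ge p^N$ form a pleasant uniform mechanism, but the numerical endgame is left as a sketch with cases ``checked by hand''.

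More seriously, there is a gap in your case analysis. The groups $G=\SO_{2n}^\pm(q)'=\Omega_{2n}^\pm(q)$ for $q$ odd and $n\ge4$ are quasi-simple with centre of order~$2$; they are neither spin groups (that is $\Spin_{2n}^\pm(q)$) nor half-spin groups (that is $\HSpin_{4m}^\pm(q)$), so they fall within the scope of the proposition, yet your list ``$\tw{(2)}A_{n-1}$ with $n\ge6$, $\tw{(2)}E_6$, or $E_7$'' omits them entirely. The paper handles this case explicitly in its final paragraph. Your framework does not immediately apply: the simply connected cover $\Spin_{2n}^\pm(q)$ is itself among the excluded groups, and for $n$ even the fundamental group $X/\mathbb{Z}\Phi\cong(\mathbb{Z}/2)^2$ is not cyclic, contrary to your standing hypothesis. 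Filling this gap would require running the weight count for the specific non-faithful central character of $\Spin_{2n}^\pm(q)$ that descends to the faithful character of $Z(\Omega_{2n}^\pm(q))$.
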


\begin{proof}
We may assume that $G$ is not an exceptional covering group of its simple
quotient, since these have centre of order divisible by the characteristic~$p$
(see e.g. \cite[Tab.~24.3]{MT}). Thus, $G$ is a central factor group of a simply
connected group of Lie type. For those, as pointed out by Willems
\cite[Exmp.~2.3(a)]{Wi05}, Conjecture~\ref{conj:TV} holds for groups with
trivial centre. It remains to discuss the groups with non-trivial centre. In
particular we need not worry about Suzuki- and Ree groups. Let $G=\bG^F$, with
$\bG$ simple of simply connected type and $F$ a Frobenius endomorphism of $\bG$
with respect to an $\FF_q$-structure.

First assume that $G=E_6(q)$ or $\tw2E_6(q)$ with $|Z(G)|=3$. Then $G$ has a
subgroup $H=F_4(q)\times Z(G)$, where the first factor is the centraliser of a
graph automorphism of $G$. Let $\psi\in\IBr(H)$ be the Steinberg character
of $F_4(q)$ times a faithful character of $Z(G)$. Then $\psi(1)=q^{24}$.
Clearly, $G$ has to have a faithful irreducible $p$-Brauer character $\vhi$ of
at least that degree. But then $\vhi(1)^2\ge q^{48}>|G|_{p'}$. Next, assume
$G=E_7(q)$ with $|Z(G)|=2$. Here, consider the subgroup $H=E_6(q)\times Z(G)$,
and let $\psi\in\IBr(H)$ be the Steinberg character of $E_6(q)$ times the
faithful linear character of $Z(G)$. This shows that $G$ has an irreducible
$p$-Brauer character $\vhi$ with $\vhi(1)^2\ge\psi(1)^2=q^{72}>|G|_{p'}$.

Next, let $G=\SL_n(q)$. Then $G$ has a subgroup $H=\SL_{n-1}(q)\times Z(G)$.
The Steinberg character of $\SL_{n-1}(q)$ times a linear character of $Z(G)$
has degree $q^{(n-1)(n-2)/2}$, so $G$ has an irreducible $p$-Brauer character
of degree at least that large, while $|G|_{p'}\le q^{(n-1)(n+2)/2}$. Thus, we
are done when $n\ge6$.

For $G=\SU_n(q)$ with $n\ge6$, we again obtain an irreducible $p$-Brauer
character from a Steinberg character $\psi$ of a subgroup
$\SU_{n-1}(q)\times Z(G)$, with $\psi(1)^2=q^{(n-1)(n-2)}$. Now using that
$(q^k-1)(q^{k+1}+1)\le q^{2k+1}$ for all $k\ge2$ we find
$$|G|_{p'}=\prod_{k=2}^n(q^k-(-1)^k)\le q^{(n-1)(n+2)/2},$$
which again is smaller than $\psi(1)^2$ when $n\ge6$. 

Since we exclude the spin, half spin and symplectic groups by assumption,
it only remains to discuss the groups $G=\SO_{2n}^\pm(q)'$, $q$ odd, $n\ge4$.
These contain a subgroup $H=\SO_{2n-1}(q)'Z(G)$, and the product of the
Steinberg character of $\SO_{2n-1}(q)'$ with the faithful character of $Z(G)$
yields an irreducible Brauer character of degree $q^{(n-1)^2}$. Thus $G$ itself
also has a faithful character of at least that degree, and its square is larger
than $|G|_{p'}$ when $n\ge4$.
\end{proof}

\begin{prop}   \label{prop:class def}
 Let $G$ be quasi-simple of classical Lie type in characteristic~$p=2$. Then
 Conjecture~\ref{conj:TV} holds for $G$ at the prime $2$.
\end{prop}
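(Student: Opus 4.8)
The plan is to clear away, one by one, the quasi-simple classical groups in defining characteristic $p=2$ not yet covered: the spin, half-spin and symplectic groups, and the groups $\SL_5(q)$ and $\SU_5(q)$; the groups of the remaining classical types are already handled by Propositions~\ref{prop:A2} and~\ref{prop:gen def}.

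First I would dispose of all quasi-simple groups of types $B_n$, $C_n$, $D_n$ and $\tw2D_n$ by an observation peculiar to characteristic~$2$. The scheme-theoretic centre of a simply connected group is the Cartier dual of its fundamental group; for these types the fundamental group is a $2$-group, so the centre is a diagonalisable group scheme of $2$-power order (one of $\mu_2$, $\mu_4$, $\mu_2\times\mu_2$), which has no non-trivial $\overline{\FF}_2$-rational points. Hence for $q$ even the groups $\Sp_{2n}(q)$, $\Spin_{2n}^{\pm}(q)$ and $\HSpin_{2n}(q)$ all have trivial centre, and Conjecture~\ref{conj:TV} for them is \cite[Exmp.~2.3(a)]{Wi05} (the finitely many groups with exceptional Schur multiplier being covered in \cite[Prop.~2.1]{TV19}). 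As $\SL_n(q)$ and $\SU_n(q)$ are treated for $n\le4$ in Proposition~\ref{prop:A2} and for $n\ge6$ in Proposition~\ref{prop:gen def}, this leaves only $G=\SL_5(q)$ and $G=\SU_5(q)$ with $q=2^f$.

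For these I would argue as in the proof of Proposition~\ref{prop:A2}, but with a single highest weight module for each faithful central character. If $5\nmid q\mp1$ then $Z(G)=1$ and \cite[Exmp.~2.3(a)]{Wi05} applies, so assume $|Z(G)|=5$ and write $G=\bG^F$ with $\bG$ simple simply connected of type $A_4$. The restriction to $G$ of the irreducible $\bG$-module $L(\lambda)$ of $q$-restricted highest weight $\lambda=(m_1,m_2,m_3,m_4)$ is an irreducible $2$-Brauer character of $G$ lying over the character of $Z(G)\cong\mathbb{Z}/5\mathbb{Z}$ of exponent $\sum_i i\,m_i\bmod5$. Taking for $\lambda$ the weight $(q-1,q-1,q-1,q-1)$ with its $i$-th coordinate lowered to $q-2$, for $i=1,2,3,4$, produces four modules lying over the four faithful characters (exponents $-1,-2,-3,-4\bmod5$). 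Writing $q-1=\sum_{j=0}^{f-1}2^j$, Steinberg's tensor product theorem, in its twisted form for $\SU_5(q)=\tw2A_4(q)$, expresses $L(\lambda)$ as $L(\lambda_0)\otimes\bigotimes_{j\ge1}L(1,1,1,1)^{(j)}$, where $\lambda_0$ is $(1,1,1,1)$ with its $i$-th entry deleted. Since $L(1,1,1,1)$ is the Steinberg module of dimension $2^{10}$ and $\dim L(\lambda_0)\ge60$ (the Weyl group orbit of the highest weight, of size $60$, consists of weights of $L(\lambda_0)$), one obtains $(\dim L(\lambda))^2\ge60^2\,(2^{10})^{2(f-1)}=60^2q^{20}/2^{20}$, which for every $q=2^f\ge4$ exceeds $\frac45q^{14}\ge|G/Z(G)|_{2'}$. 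Conjecture~\ref{conj:TV} follows.

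The centre computation of the first step is routine, and the $\SL_5(q)$ instance of the second is a short explicit estimate. The one point requiring genuine care is the unitary group $\SU_5(q)=\tw2A_4(q)$: there one must invoke the twisted version of Steinberg's tensor product theorem to compute $\dim L(\lambda)$, and verify that the graph twist does not disturb the correspondence between the modules $L(\lambda)$ and the faithful characters of $Z(\SU_5(q))$ above which they lie. Once this is in place, the dimension estimate (and with it the whole argument) goes through just as in the linear case, with comfortable room to spare already for the smallest relevant field $\FF_4$.
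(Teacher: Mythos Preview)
Your argument is correct and follows the same overall route as the paper: both reduce to $\SL_5(q)$ and $\SU_5(q)$ by observing that the simply connected groups of types $B_n,C_n,D_n$ have trivial centre in characteristic~$2$, and both then manufacture a large faithful irreducible via Steinberg's tensor product theorem, tensoring a small $2$-restricted module with $f-1$ Frobenius twists of the restricted Steinberg module $L(1,1,1,1)$. The only difference is the choice of small factor: the paper uses the natural module of dimension~$5$, which forces $f\ge3$ and a separate treatment of $\SU_5(4)$ via the exterior square, whereas your modules $L(\lambda_0)$ with $\dim\ge60$ handle all $q\ge4$ uniformly and, as a bonus, make explicit one module for each of the four faithful characters of $Z(G)$ (the paper leaves this implicit, the diagonal automorphism from $\GL_5$ or $\mathrm{GU}_5$ permuting the faithful $\theta$'s). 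One minor remark: no genuinely ``twisted'' version of the tensor product theorem is needed for $\SU_5(q)$, since the decomposition $L(\lambda)\cong L(\lambda_0)\otimes\bigotimes_{j\ge1}L(1,1,1,1)^{(j)}$ and the ensuing dimension count are statements about $\bG=\SL_5$-modules; only the irreducibility of the restriction of $L(\lambda)$ to $\bG^F$ invokes Steinberg's restriction theorem, and that holds for an arbitrary Steinberg endomorphism.
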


\begin{proof}
By the results of Propositions~\ref{prop:A2} and~\ref{prop:gen def} we only
need to consider the groups $\SL_n(q)$, $\SU_n(q)$ for $n=5$ and $q=2^f$, as
the spin, half spin and symplectic groups have trivial centre in
characteristic~2.
For $\SL_5(q)$ the tensor product of the natural module with $f-1$ twists of the
1024-dimensional restricted Steinberg module yields an irreducible
representation of sufficiently large degree when $f\ge3$. For $f\le2$ the
centre of $\SL_5(2^f)$ is trivial. For the unitary groups, we can argue
similarly for $f\ge3$. For $f=2$ we take the tensor product of the Steinberg
character with the exterior square of the natural representation, and for
$\SU_5(2)$ the centre is trivial.
\end{proof}

We now state some further partial results.

\begin{lem}   \label{lem:Spn}
 Let $G=\Sp_{2n}(q)$ with $q=p^f$. Then Conjecture~\ref{conj:TV} holds for $G$
 at $p$ if either $n\ge5$ or $f\ge2$.
\end{lem}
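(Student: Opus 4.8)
The plan is to handle the symplectic groups $G=\Sp_{2n}(q)$ in defining characteristic, where the centre $Z(G)$ has order $(2,q-1)$, so there is something to prove only when $q$ is odd; thus we may and do assume $p$ is odd. By Remark~\ref{rem:St} applied to the Frobenius twist, it suffices to treat the case $q=p$ (the prime field), since a family of irreducible restricted modules over a fixed faithful central character whose squared degrees sum to at least $|G/Z(G)|_{p'}$ can be tensored with Frobenius-twisted Steinberg modules to yield the same inequality for $\Sp_{2n}(p^f)$. So the task reduces to: for $p$ odd and $n\ge5$ (or handled by earlier results when $n\le 4$), produce enough irreducible $p$-restricted highest-weight modules $L(\lambda)$ of $\Sp_{2n}$ lying above a faithful character of $Z(G)=\langle -1\rangle$ so that $\sum \dim L(\lambda)^2 \ge |\Sp_{2n}(p)/Z|_{p'} = \tfrac12\prod_{k=1}^{n}(p^{2k}-1)$.

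The main tool, exactly as in the proof of Proposition~\ref{prop:A2} for $\Sp_4$, is Premet's theorem \cite[Thm.~1]{Pr87}: since $p$ is large relative to the (fixed, bounded) structure constants only in a weak sense — actually Premet's theorem holds with no hypothesis on $p$ for $\lambda$ $p$-restricted in this type for $p\ne 2$ — every weight subdominant to $\lambda$ occurs in $L(\lambda)$, so $\dim L(\lambda)\ge \sum_{\mu} |W\mu|$, the sum running over dominant $\mu\le\lambda$, where $|W\mu|$ is the Weyl group orbit length. The centre $-1$ of $\Sp_{2n}$ acts on $L(\lambda)$ by $(-1)^{\langle\lambda,\sum\alpha^\vee\rangle}$ in suitable coordinates; in the standard basis $\lambda=\sum m_i\varpi_i$ is faithful on $Z$ precisely when $\sum_{i \text{ odd}} m_i$ (equivalently the appropriate parity of the $m_i$) is odd. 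I would pick a convenient sub-family of such $\lambda$ with all $m_i\le p-1$, for instance $\lambda=(m_1,0,\dots,0,m_n)$ or even just multiples of a single fundamental weight adjusted for the parity condition, estimate $\dim L(\lambda)$ from below by a polynomial in the $m_i$ of degree $2n$ (the number of positive roots contributing, or rather the dimension of the flag variety is $n^2$, so generic orbits have size comparable to $2^n n!$ and the dominant-weight count grows like $\prod m_i$), and then sum the squares of these lower bounds over the allowed range $0\le m_i\le p-1$ subject to the single parity constraint. A crude but serviceable estimate: the number of faithful $p$-restricted weights is $\sim \tfrac12 p^n$, each with $\dim L(\lambda)$ at least on the order of the full Weyl orbit when all $m_i$ are comparable to $p$, giving $\sum \dim L(\lambda)^2$ of order $p^{n}\cdot (2^n n!)^2 \cdot p^{?}$ — one checks this dominates $|G/Z|_{p'}\sim p^{n^2+n}/2$.

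The step I expect to be the real obstacle is obtaining a clean, uniform-in-$n$ lower bound for $\dim L(\lambda)$ that is sharp enough: counting Weyl orbits of all dominant weights subdominant to a given $p$-restricted $\lambda$ in type $C_n$ is combinatorially heavier than the rank-2 case done explicitly for $\Sp_4$, and a naive bound (e.g.\ just $|W\lambda|$ alone) loses too much. I would circumvent this by not trying to count all subdominant weights but by restricting to a one- or two-parameter family of $\lambda$ where the subdominant weights and their orbit lengths are transparent — say $\lambda = i\varpi_1$ or $\lambda=i\varpi_1+\varpi_2$ for $i$ running over an arithmetic progression fixing the central parity — and showing already that the squared degrees over this restricted family sum past the target. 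Concretely, $\dim L(i\varpi_1)$ for $\Sp_{2n}$, $p$ odd, $i\le p-1$, is the full dimension of the Weyl module $\binom{2n+i-1}{i} - \binom{2n+i-3}{i-2}$ minus smaller corrections (and by Premet is at least the number of subdominant weights, which one bounds below by $c_n i^n$ for an explicit constant $c_n$ of size roughly $1/n!$); summing $(c_n i^n)^2$ over $i$ in a progression of length $\sim p/2$ gives $\sim c_n^2 p^{2n+1}/(4(2n+1))$, and comparing with $\tfrac12\prod_{k=1}^n(p^{2k}-1) < \tfrac12 p^{n^2+n}$ shows this single family already suffices once $2n+1 > n^2+n$ — which fails for $n\ge3$, so the one-parameter family is \emph{not} enough and one genuinely needs the product-of-weights growth. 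Hence the honest route is the $n$-parameter family $\lambda=(m_1,\dots,m_n)$ with $\dim L(\lambda)\ge \prod_i (m_i+1)$ (a trivial lower bound from a chamber of subdominant weights) times the generic orbit size $2^n n!/(\text{stabiliser})$; bootstrapping even just $\dim L(\lambda) \ge 2^{?}\prod(m_i+1)$ and summing squares over $0\le m_i\le p-1$ with one parity constraint yields a quantity of order $p^{n}\cdot p^{2n}/3^n \cdot (\text{orbit})^2$ which, with the orbit factor $(2^n n!)^2$, comfortably exceeds $p^{n^2+n}/2$ for $n\ge5$ — the interplay of these exponents is exactly where the case division $n\ge5$ (versus $n\le 4$, dispatched by Proposition~\ref{prop:A2} and Proposition~\ref{prop:gen def}) and the fallback to $f\ge 2$ via Remark~\ref{rem:St} come from, and making this estimate rigorous and tracking constants is the bulk of the work.
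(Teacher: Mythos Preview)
Your approach has a genuine gap in the exponent count, not just in the constants. Premet's theorem bounds $\dim L(\lambda)$ below by the number of weights subdominant to $\lambda$, each counted with multiplicity one; these are the lattice points (in the appropriate coset of the root lattice) inside the convex hull of $W\lambda$, a polytope in an $n$-dimensional space. Hence their number is a polynomial of degree~$n$ (the rank) in the coordinates of $\lambda$, so for $p$-restricted $\lambda$ one obtains at best $\dim L(\lambda)\ge c_n\,p^{n}$. Summing the squares over the roughly $p^{n}/2$ faithful restricted highest weights therefore gives only a quantity of order $p^{3n}$ times a constant depending on~$n$. But the target $|G/Z|_{p'}$ is of order $p^{n^{2}+n}$, and $3n<n^{2}+n$ for every $n\ge3$; the Premet-based count works for $\Sp_{4}$ precisely because $3n=n^{2}+n$ at $n=2$, and cannot be salvaged for larger $n$ by tracking constants, since the shortfall is a factor $p^{\,n^{2}-2n}$ growing with~$p$. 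Concretely, your final inequality asks that $(2^{n}n!)^{2}/3^{n}$ dominate $p^{\,n^{2}-2n}$, which already fails for $n=5$, $p=3$.

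The paper avoids weight-multiplicity estimates altogether and instead uses a restriction argument. The central product $G_{1}=\Sp_{2n}(q)\circ\Sp_{2}(q)$ embeds in $H=\PSp_{2n+2}(q)$, and the Steinberg character of $H$ has $p$-defect zero and degree $q^{(n+1)^{2}}$. Some irreducible $p$-Brauer constituent of its restriction to $G_{1}$ therefore has degree at least $q^{(n+1)^{2}}/|H:G_{1}|$; stripping off the $\Sp_{2}(q)$ tensor factor (whose faithful Brauer characters have degree at most $q-1$) yields a single faithful irreducible $p$-Brauer character of $\Sp_{2n}(q)$ of degree at least $q^{(n+1)^{2}}/\big((q-1)|H:G_{1}|\big)$, which is of order $q^{\,n^{2}-2n}$. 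Its square exceeds $|G|_{p'}\sim q^{\,n^{2}+n}$ exactly when $2(n^{2}-2n)\ge n^{2}+n$, i.e.\ $n\ge5$. For $n\le4$ with $f\ge2$ one then tensors this character at $q=p$ with Frobenius twists of the Steinberg character, as you correctly anticipated via Remark~\ref{rem:St}.
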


\begin{proof}
Recall that we only need to consider the faithful characters of $G$.
The symplectic group $\Sp_{2n+2}(q)$ has a subsystem subgroup
$\Sp_{2n}(q)\times\Sp_2(q)$, and its image in $H:=\PSp_{2n+2}(q)$ is a central
product $G_1:=\Sp_{2n}(q)\circ\Sp_2(q)$ containing a subgroup $\Sp_{2n}(q)$.
Now the Steinberg character of $H$ of degree $q^{(n+1)^2}$ has $p$-defect zero,
so yields an irreducible $p$-Brauer character of $H$ of that degree.
Hence the subgroup $G_1$ has to have a faithful irreducible Brauer character
of degree at least $q^{(n+1)^2}/|H:G_1|$. Since the faithful irreducible Brauer
characters of $\Sp_2(q)$ have degree at most $q-1$, $G$ must have a faithful
irreducible Brauer character of degree at least $q^{(n+1)^2}/((q-1)|H:G_1|)$. But
the square of this is at least $|G|_{p'}$ whenever $n\ge5$. When $n\le4$ but
$f\ge2$, we may take the tensor product of that character at $q=p$ with twists
of the Steinberg character.
\end{proof}

Thus, with Proposition~\ref{prop:A2} among symplectic groups $\Sp_{2n}(q)$ only
the faithful block for $n=3,4$ and $q=p$ odd remains open.

\begin{prop}
 Let $G$ be a covering group of a simple orthogonal group of rank $n\ge3$
 over $\FF_q$, and assume that $q=p^f$ with $f\ge2$. Then
 Conjecture~\ref{conj:TV} holds for all covering groups of $G$ at $p$ unless
 possibly when $G=\Spin_8^-(q)$, where it holds when $f\ge3$.
\end{prop}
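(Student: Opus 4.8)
The plan is to exhibit, for each relevant covering group and each faithful central character, a \emph{single} faithful irreducible $p$-Brauer character whose degree captures almost the whole $p$-part of the group order. As a first step I would invoke \cite[Exmp.~2.3(a)]{Wi05}: the Steinberg character disposes of the groups of trivial centre (there $\operatorname{St}(1)^2=q^{2N}\ge q^{N+n}=|G|_{p'}$), so one may assume $p$ is odd and $G=\bG^F$ is a proper quasi-simple covering group of a simple orthogonal group, with $\bG$ simply connected of type $B_n$ or $D_n$ and $Z(G)=Z(\bG)^F$ cyclic (the case $Z(\bG)^F\cong C_2\times C_2$, of type $D_n$ with $n$ even and split, is vacuous since no $\theta\in\Irr(Z(G))$ is faithful). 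The subcase of type $D_3$ is $\SL_4(q)$ or $\SU_4(q)$ and is already covered by Proposition~\ref{prop:A2}, so I take $n\ge3$ in type $B$ and $n\ge4$ in type $D$.

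Fix $q=p^f$ and a faithful $\theta\in\Irr(Z(G))$, and choose $j\in\{1,\dots,n\}$ so that the image $\bar\omega_j$ of the fundamental weight $\omega_j$ in $P/Q\cong\widehat{Z(\bG)}$ restricts to $\theta$ on $Z(\bG)^F$; this is always possible because the $\bar\omega_j$ generate $P/Q$, and since $p$ is odd, $-\bar\omega_j$ has the same order in $P/Q$, so the dominant, $q$-restricted weight $\lambda:=(q-1)\rho-\omega_j$ again affords $\theta$. (Concretely one takes $j=n$ for $\Spin_{2n+1}(q)$ and for the spin- and half-spin-type covers in type $D$, and $j=1$ for the $\SO$-type covers; the one delicate point is to follow the action of $F$ on $Z(\bG)$ through the graph automorphism for the twisted forms, e.g.\ to see that for $\Spin_8^-(q)$ one must take $j\in\{3,4\}$, where $\omega_1$ would give the trivial character on $Z(\bG)^F$.) As $\lambda$ is $q$-restricted, $\chi:=L(\lambda)$ restricts irreducibly to $G$ and lies over $\theta$; and as $\lambda+\omega_j=(q-1)\rho$ is the multiplicity-one highest weight of $L(\lambda)\otimes L(\omega_j)$, the Steinberg module $L((q-1)\rho)$ is a composition factor of $L(\lambda)\otimes L(\omega_j)$. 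Hence, with $N:=|\Phi^+|$,
$$\chi(1)=\dim L(\lambda)\ \ge\ \frac{\dim L((q-1)\rho)}{\dim L(\omega_j)}\ =\ \frac{q^{N}}{\dim L(\omega_j)},$$
where $N=n^2$ for $B_n$, $N=n(n-1)$ for $D_n$, and $\dim L(\omega_j)$ — the dimension of a natural module ($2n$ or $2n+1$) or of a (half-)spin module ($2^{n-1}$ or $2^{n}$) — satisfies $\dim L(\omega_j)\le2^{n}$ for $p$ odd.

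It then remains to run the estimate. The sum of the degrees of $\bG$ is $N+n$, so $|G/Z(G)|_{p'}<q^{N+n+1}/|Z(G)|$ (the extra power of $q$ absorbing the $q^n+1$ factor of the twisted forms), and $|Z(G)|\le4$; thus Conjecture~\ref{conj:TV} holds for $G$ as soon as
$$q^{\,N-n-1}\ \ge\ \frac{(\dim L(\omega_j))^2}{|Z(G)|},$$
which, using $\dim L(\omega_j)\le2^{n}$, is implied by $q^{\,n^2-n-1}\ge4^{n}/|Z(G)|$ for $B_n$ and by $q^{\,n^2-2n-1}\ge4^{n}/|Z(G)|$ for $D_n$; since $q\ge3$ is odd these hold for every $n\ge3$, respectively every $n\ge4$ (the smallest pairs, e.g.\ $(n,q)=(3,3)$ in type $B$ and $(4,3)$ in type $D$, are already comfortably inside the range). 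So a single Brauer character does the job, uniformly in $q$, and no reduction to the prime field (Remark~\ref{rem:St}) is needed.

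The part of this most likely to hide errors — and hence the step I would treat most carefully — is the isogeny/central-character bookkeeping of the second paragraph: for each of $\Spin_{2n+1}(q)$, $\SO_{2n}^\pm(q)'$, $\HSpin_{2n}^\pm(q)$ and the split or twisted $\Spin_{2n}^\pm(q)$ one must pin down $Z(\bG)^F$ (tracing the Frobenius and the graph automorphism, and the residue of $q$ modulo $4$) and then the correct $\omega_j$, and the handful of smallest pairs $(n,q)$ should be checked against the known lists of small-dimensional modular representations, cf.\ \cite{Lue01}. From this vantage point I do not see what forces the hypothesis $f\ge2$, nor the extra restriction $f\ge3$ for $\Spin_8^-$; I would expect those to reflect a softer argument — for instance, one using only Premet's weight-space bound $\dim L(\lambda)\gg q^{n}$, which is too weak for a single character and so requires summing over a whole family of weights, exactly as in Proposition~\ref{prop:A2} — and that with the Steinberg-composition-factor bound above the conclusion in fact holds for all $q$.
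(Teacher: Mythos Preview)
Your approach is genuinely different from the paper's and, as you suspect, sharper. The paper argues by subgroup chains: it starts from the Steinberg character of a Levi-type subgroup (e.g.\ the preimage of $\GL_n(p)\le\SO_{2n}^+(p)$ in $\Spin_{2n}^+(p)$, or $\Spin_{2m}^+\le\Spin_{2m+1}\le\cdots$), transports this to a faithful Brauer character of $G$ over $\FF_p$, and then tensors with Frobenius twists of the Steinberg module to pass to $\FF_q$. The resulting bound has the shape $(q/p)^{N}\cdot p^{n(n-1)/2}$, which only beats $|G|_{p'}$ once $f\ge2$, and for $\Spin_8^-$ the chain through $\Spin_7$ is too short, forcing $f\ge3$. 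Your composition-factor trick --- $L((q-1)\rho)$ is a summand of $L(\lambda)\otimes L(\omega_j)$ with $\lambda=(q-1)\rho-\omega_j$, hence $\dim L(\lambda)\ge q^{N}/\dim L(\omega_j)$ --- gives the much better bound $q^{N}/2^{n}$ directly over $\FF_q$, and you are right that this disposes of all odd $q$, including $f=1$ and including $\Spin_8^-(q)$.

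Two small points to tighten. First, the sentence ``this is always possible because the $\bar\omega_j$ generate $P/Q$'' is not a valid justification: generating $P/Q$ does not guarantee that every coset is represented by some $\bar\omega_j$. What you actually need (and what is true for types $B_n$ and $D_n$) is that every nonzero element of $P/Q$ equals $\bar\omega_j$ for some $j\in\{1,n-1,n\}$; this should be stated and checked case by case, together with the action of the graph automorphism on $P/Q$ for the twisted forms. Second, since $\bar\lambda=-\bar\omega_j$, the module $L(\lambda)$ lies over $\theta$ precisely when $\bar\omega_j$ restricts to $\theta^{-1}$; for $D_n$ with $n$ odd and $|Z(\bG^F)|=4$ this matters, so the choice of $j$ should be phrased accordingly (your parenthetical about ``$-\bar\omega_j$ has the same order'' is the right observation but does not by itself pin down the correct $\theta$). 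With those two clarifications your argument is complete and improves on the paper's statement.
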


\begin{proof}
By Proposition~\ref{prop:class def} it remains to discuss the spin groups and
half spin groups for odd~$q$. Also, $\Spin_{4n}^+(q)$ has non-cyclic centre, so
we need not consider it. First assume that
$G=\Spin_{2n}^+(p)$ with $n\ge3$ odd. Let $H$ be the preimage in $G$ of the
stabiliser $\GL_n(p)$ of a totally isotropic subspace for $\SO_{2n}^+(p)$. As
$n$ is odd, $\PSL_n(p)$ has odd Schur multiplier, so the Steinberg character of
$\SL_n(p)$ extends to a faithful $p$-Brauer character $\psi$ of $H$, of degree
$p^{n(n-1)/2}$. Thus, $G$ has a faithful irreducible character of at least that
degree. Then, by Steinberg's tensor product theorem, $\Spin_{2n}^+(q)$ has a
faithful irreducible Brauer character of degree at least $(q/p)^{n^2}\psi(1)$,
which is larger than $|G|_{p'}$ when $f\ge2$. Now $\Spin_{2n+1}(p)$ contains a
subgroup $\Spin_{2n}^+(p)$ whose centre lies in the centre of $\Spin_{2n+1}(p)$.
As the Steinberg character of $\Spin_{2n+1}(p)$ has degree $p^{n^2}$, this
shows that $\Spin_{2n+1}(q)$ has a character as claimed (where still $n$ is odd).
Unless $n=3$ the same argument also applies to $\HSpin_{2n+2}^+(q)$,
$\Spin_{2n+2}^-(q)$, $\Spin_{2n+3}(q)$ and $\Spin_{2n+4}^-(q)$.

The group $\HSpin_8^+(q)$ is isomorphic to $\SO_8^+(q)$ by the triality
automorphism and thus by Proposition~\ref{prop:gen def} has a faithful
Brauer character of degree $q^9$. Using this, we also obtain our claim for
$\Spin_9(q)$ and $\Spin_{10}^-(q)$. 
\end{proof}

\section{The non-defining characteristic case}   \label{sec:crosschar}
We now turn to groups of Lie type $G$ in cross characteristic. That is,
we assume that $p$ is not the defining characteristic of $G$. Here, Willems
\cite[Thm.~3.1]{Wi05} obtained certain asymptotic results on
Conjecture~\ref{conj:W}, which were improved upon for several families of groups
in the thesis of Maslowski \cite{Ma05}. Nonetheless, both of these fall short of
proving Conjecture~\ref{conj:TV} in the case at hand.

We will again argue using suitable characters of $p$-defect zero, as considered
by Willems \cite{Wi88}, but in order to obtain complete results we need to show
that there exist sufficiently many of these.

Let $\bG$ be a simple algebraic group and $F:\bG\to\bG$ a Steinberg
endomorphism. The regular semisimple elements are known to be dense in $\bG$, so
we certainly expect a large proportion of regular semisimple classes in the
finite group $\bG^F$. Here, motivated by our application in character theory,
we quantify this expectation by giving a (rather weak) lower bound for their
number.

\begin{prop}   \label{prop:reg elt}
 Let $(\bG,F)$ be as above, with
 $$\bG^F\ne \SU_4(2),\ \Sp_4(2),\ \Sp_6(2),\ \OO_8^+(2),\  \OO_8^-(2),\ 
   \SU_3(3).$$
 Then a lower bound for the number $n_\reg(T)$ of conjugacy classes of regular
 semisimple conjugacy classes meeting certain maximal tori $T$ of $\bG^F$ is
 bounded below as given in Table~$\ref{tab:bnd}$ for classical groups, and in
 Table~$\ref{tab:tori exc}$ for exceptional type groups.
\end{prop}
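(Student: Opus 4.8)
The plan is to reduce the counting of regular semisimple classes meeting a torus $T = \bT^F$ to a counting problem about $F$-conjugacy classes in the Weyl group $W = N_{\bG}(\bT)/\bT$, and then to extract an explicit lower bound case-by-case by type. Recall the standard dictionary: the $\bG^F$-classes of maximal tori are parametrised by $F$-conjugacy classes $[w]$ in $W$, and for a torus $\bT_w$ of type $w$ one has $|\bT_w^F| = |\det(q - w)|$ evaluated on the reflection representation (or, in the twisted cases, on the appropriate $\tw{d}$-twisted version), while the number of $\bG^F$-classes of elements lying in $\bT_w^F$ equals the number of $C_{W,F}(w)$-orbits on $\bT_w^F$. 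An element $s \in \bT_w^F$ is regular semisimple precisely when $C_{\bG}(s)^\circ = \bT_w$, equivalently when $s$ avoids the (finitely many) root hyperplanes $\al = 1$; so $n_{\reg}(T)$ for $T = \bT_w^F$ is bounded below by $(|\bT_w^F| - R_w)/|C_{W,F}(w)|$, where $R_w$ counts the non-regular elements of $\bT_w^F$, i.e. those lying on some subtorus $\ker(\al)$. First I would fix, for each Dynkin type, one or two well-chosen classes $[w]$ — typically a Coxeter element (giving $|\bT_w^F| = \Phi_h(q)$ up to small factors, with $C_{W,F}(w)$ cyclic of order $h$ or $2h$) and perhaps the class of $-1$ or a product of two cycles for the classical series — since these maximise $|\bT_w^F|$ relative to $|C_{W,F}(w)|$.

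Next I would bound $R_w$ crudely from above: each root $\al$ contributes at most $|\ker(\al) \cap \bT_w^F|$ elements, and this is a polynomial in $q$ of degree $\operatorname{rk}(\bG) - 1$, so $R_w \le |\Phi^+| \cdot c \cdot q^{\,\operatorname{rk}\bG - 1}$ for an explicit constant $c$; summing over the bounded number of roots and dividing by $|C_{W,F}(w)|$ gives a lower bound of the shape $(q^{\operatorname{rk}\bG} - O(q^{\operatorname{rk}\bG - 1}))/|W|$-ish, which for the Coxeter torus sharpens to roughly $\Phi_h(q)/h$ minus a lower-order error. Then, type by type, I would evaluate the cyclotomic polynomial $|\bT_w^F|$ exactly (using the known factorisations $|\SL_n|$, $|\SU_n|$, $|\Sp_{2n}|$, $|\SO_{2n}^\pm|$, etc., and the eigenvalues of the chosen $w$ on the reflection representation), subtract the root-hyperplane correction, divide by $|C_{W,F}(w)|$, and simplify to the closed-form lower bound recorded in Table~\ref{tab:bnd} (classical types) and Table~\ref{tab:tori exc} (exceptional types). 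The small list of excluded groups $\SU_4(2), \Sp_4(2), \Sp_6(2), \OO_8^\pm(2), \SU_3(3)$ is exactly the set of pairs $(\bG,q)$ with $q$ so small that the error term $R_w$ swamps the main term $|\bT_w^F|$ for every available torus; for these one simply checks by hand (or cites known class-number data) that the stated bound fails, which is why they are thrown out.

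The main obstacle I anticipate is getting the root-hyperplane count $R_w$ sharp enough that the resulting bound is clean: a naive union bound over $|\Phi^+|$ hyperplanes, each of size up to $q^{\operatorname{rk}\bG-1}$, is already borderline when $q$ is small, and for the twisted groups ($\tw2A_n$, $\tw2D_n$, $\tw2E_6$, and the Suzuki/Ree types) one must be careful that $\bT_w^F$ and $\ker(\al)^F$ are computed with the correct twisted Frobenius action — the relevant quantities become values of cyclotomic polynomials $\Phi_d$ at $q$ with $d$ reflecting the twist, not just $q^k \pm 1$. A secondary technical point is that $|C_{W,F}(w)|$ must be computed as the \emph{$F$-centraliser} (equivalently $C_W(wF)$ in $W \rtimes \langle F\rangle$), which for a Coxeter element is well known but must be stated correctly for the other representatives. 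Once these bookkeeping issues are handled uniformly, the remaining work is the routine but lengthy per-type computation that produces the table entries; I would organise it as a short lemma isolating the estimate $n_{\reg}(\bT_w^F) \ge (|\bT_w^F| - R_w)/|C_{W,F}(w)|$, followed by a type-by-type verification, deferring the bulkiest arithmetic to the tables themselves.
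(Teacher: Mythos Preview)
Your general framework---pick a convenient maximal torus $T=\bT_w^F$, count its regular elements, and divide by $|N_G(T):T|=|C_{W,F}(w)|$---is exactly what the paper does, but the mechanism you propose for the counting step has a genuine gap. The crude union bound $R_w\le|\Phi^+|\cdot c\,q^{\operatorname{rk}\bG-1}$ degrades badly when $q$ is small and the rank is large: for instance in type $A_{n-1}$ over $\FF_2$ the Coxeter torus has order $2^n-1$ while your estimate for the non-regular locus is of order $n^2\cdot 2^{n-2}$, which already exceeds $|T|$ for $n\ge3$. So the inequality $|T|-R_w>0$ that your argument needs simply fails, and no amount of per-type bookkeeping will recover the clean bounds like $|T|/(n+1)$ recorded in the tables. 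One could try to sharpen the hyperplane estimate torus-by-torus, but then the argument is no longer uniform and becomes a much longer case analysis than what you sketch.

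The paper's device bypasses this entirely via Zsigmondy primes. For each chosen torus $T$ there is an integer $e$ (listed in the table) such that a Zsigmondy prime $r=z_e(q)$ divides $|T|$; the Sylow $r$-subgroups of $G$ are cyclic and contained only in conjugates of $T$, so \emph{every} element of $T$ of order divisible by $r$ is automatically regular. This gives at least $|T|(r-1)/r\ge|T|\,e/(e+1)$ regular elements, a fixed fraction of $|T|$ completely independent of $|\Phi^+|$, and dividing by $|N_G(T):T|$ yields the table entries directly. The excluded groups are then not those where ``the error term swamps the main term for every torus'' as you guessed, but precisely the Zsigmondy exceptions $(e,q)=(6,2)$ together with a handful of very small cases with $e\le2$, which are handled by inspecting known character tables.
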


\begin{proof}
The proof is independent of the isogeny type of $\bG$. Let $q$ be the absolute
value of the eigenvalues of $F$ on the character group of an $F$-stable maximal
torus of $\bG$. We will use the following
well-known result of Zsigmondy: for every integer $e\ge3$ and prime power $q$
there exists a prime $z_e(q)$ dividing $q^e-1$ but no $q^m-1$ for $1\le m<e$,
unless $(e,q)=(6,2)$. Since $q$ has multiplicative order $e$ modulo $z_e(q)$, we
see that $z_e(q)\ge e+1$.
\par
For each group $G=\bG^F$ of classical type we have given in Table~\ref{tab:bnd}
(the order of) two maximal tori $T\le G$. These two tori have been chosen such
that the greatest common divisor of their orders is exactly the order of the
centre of the simply connected group $G_\SC$ of that type, which is also the
order of the commutator factor group of the adjoint group of that type.
Furthermore, the image of $T$ in $G_\SC/Z(G_\SC)$ is cyclic, except for the
first two tori listed for type $D_n$. For each torus the table also gives an
integer $e$ such that $|T|$ is divisible by a Zsigmondy prime $r:=z_e(q)$,
whenever that exists, that is, if $e\ge3$ and $(e,q)\ne(6,2)$. In all cases
listed in the table, the description of Sylow $p$-subgroups in
\cite[Thm.~25.14]{MT} together with the order formula for $G$ (see
\cite[Tab.~24.1]{MT}) shows that the Sylow $r$-subgroups of $G$ are cyclic.
Moreover, by the parametrisation of maximal tori via conjugacy classes of the
Weyl group (see \cite[Prop.~25.3]{MT}), the conjugates of $T$
are the only maximal tori of $G$ containing elements of order $r$, except for
the torus of order $(q^{n-1}-1)(q+1)$ in types $B_n,C_n$. In particular, apart
from that latter case, all elements of $T$ of order divisible by $r$ are
necessarily regular. Now $T$ certainly contains at least $|T|(r-1)/r$ such
elements.
Since $r\ge e+1$, we thus find at least $|T|e/(e+1)$ regular elements in~$T$.
In the case of the torus $T$ of order $(q^{n-1}-1)(q+1)$, all products of
elements of order $r$ by an element of order at least~3 in the factor of order
$q+1$ are regular, and thus there exist at least $(q^{n-1}-1)(q-1)e/(e+1)$
regular elements in $T$.

The fusion of elements in any maximal torus is controlled by its normaliser,
so the number $n_\reg(T)$ of regular conjugacy classes in $G$ with
representatives in $T$ is at least $|N_G(T):T|^{-1}$ times the number of
regular elements in $T$. These are exactly the entries in the last column of
Table~\ref{tab:bnd}.

\begin{table}[ht]
\caption{Lower bounds in classical groups}   \label{tab:bnd}
$$\begin{array}{c|cccc}
 G& |T|& e& |N_G(T):T|& n_\reg(T)\ge\cr
\hline\hline
 A_1(q)& q+1& 2& 2& (q-1)/2\\
       & q-1& 1& 2& (q-3)/2\\
\hline
 A_{n-1}(q),\ n\ge3& (q^n-1)/(q-1)& n& n& |T|/(n+1)\\
                   & q^{n-1}-1& n-1& n-1& |T|/n\\
\hline
     \tw2A_{n-1}(q),& (q^n+1)/(q+1)& 2n& n& 2|T|/(2n+1)\\
  3\le n\equiv1\,(2)& q^{n-1}-1& n-1& n-1& |T|/n\\
\hline
     \tw2A_{n-1}(q),& q^{n-1}+1& 2n-2& n-1& 2|T|/(2n-1)\\
  4\le n\equiv0\,(2)& (q^n-1)/(q+1)& n& n& |T|/(n+1)\\
\hline
 B_n(q),C_n(q),\ n\ge2& q^n+1& 2n& 2n& |T|/(2n+1)\\
                  n=2:& q^2-1&  2&  4& (q-1)(q-2)/4\\
   4\le n\equiv0\,(2):& (q^{n-1}-1)(q+1)& n-1& 4n-4& (q^{n-1}-1)(q-1)/(4n)\\
        n\equiv1\,(2):& q^n-1& n& 2n& |T|/(2n+2)\\
\hline
  D_n(q),\ n\ge4& (q^{n-1}+1)(q+1)& 2n-2& 2n-2& |T|/(2n-1)\\
  n\equiv0\,(2):& (q^{n-1}-1)(q-1)& n-1& 2n-2& |T|/(2n)\\
  n\equiv1\,(2):& q^n-1& n& n& |T|/(n+1)\\
\hline
 \tw2D_n(q),\ n\ge4& q^n+1& 2n& n& 2|T|/(2n+1)\\
                   & (q^{n-1}+1)(q-1)& 2n-2& 2n-2& |T|/(2n-1)\\
\end{array}$$
\end{table}

We now discuss the cases in Table~\ref{tab:bnd} when there do not always exist
Zsigmondy primes. For $G=A_1(q)$ a maximal torus of order $q\pm1$ contains at
least $q\pm1-\gcd(2,q-1)$ regular elements and hence representatives from
$(q\pm1-\gcd(2,q-1))/2$ regular conjugacy classes. For $G=A_2(q)$, it can be
seen by direct
calculation that a maximal torus of order $q^2-1$ intersects $(q^2-q)/2\ge |T|/3$
regular classes, as stated in the table. For $G=\tw2A_2(q)$, a maximal torus of
order $q^2-1$ meets $(q+1)(q-2)/2$ regular classes, which is smaller than
$|T|/3$ only when $q=2,3$. In the first case, $G$ is solvable, the second was
excluded.  For $G=B_2(q)$ a maximal torus of order $q^2-1$ meets $(q-1)(q-2)/4$
regular semisimple classes, as stated in Table~\ref{tab:bnd}.

To complete the discussion of Zsigmondy exceptions, we finally consider the
cases when $(e,q)=(6,2)$ in Table~\ref{tab:bnd}. This concerns the following
groups:
$$\begin{array}{c|cccccccc}
 G& \SL_6(2)& \SL_7(2)& \SU_4(2)& \SU_6(2)& \SU_7(2)& \Sp_6(2)& \OO_8^+(2)& \OO_8^-(2)\\
\hline
 |T|& 63& 63& 9& 21& 63& 9& 27& 9\\
 n_\reg(T)& 9& 9& 2& 3& 9& 1& 3& 1\\
\end{array}$$
The numbers $n_\reg(T)$ can be read off from the known character tables.
For $\SL_6(2)$, $\SL_7(2)$, $\SU_6(2)$ and $\SU_7(2)$ this agrees with the
numbers given in Table~\ref{tab:bnd}, while the other groups are listed as
exceptions.

We now turn to the groups of exceptional type, for which the arguments are
very similar. In Table~\ref{tab:tori exc} for each type we give a maximal torus
$T$, respectively two tori for $G$ of type $E_7$. In all cases, $T$ is cyclic
by \cite[Thm.~25.14]{MT} and for all groups different from $E_7(q)$, all
elements of $T$ of order not dividing $|Z(G_\SC)|$ are regular by the order
formula for $G$ (see \cite[Tab.~24.1]{MT}). For $E_7(q)$, the maximal tori
are Coxeter tori of maximal rank subgroups of type $\tw2A_7(q)$, $A_7(q)$
respectively, and all of their elements not lying in the subgroup of order
$q\pm1$ are regular.

\begin{table}[ht]
\caption{Lower bounds in exceptional groups}   \label{tab:tori exc}
$$\begin{array}{l|ccc}
 G& |T|& |N_G(T):T|& n_\reg(T)\cr
\hline
 \!\tw2B_2(q^2),\ q^2\ge8& \Phi_8''& \ 4& (|T|-1)/4\cr
 \!\!^2G_2(q^2),\ q^2\ge27& \Phi_{12}''& \ 6& (|T|-1)/6\cr
 G_2(q),\,q\equiv1\,(3)& \Phi_6& \ 6& (|T|-1)/6\cr
 \hphantom{G_2(q),\ }q\not\equiv1\,(3)& \Phi_3& \ 6& (|T|-1)/6\cr
 \!\tw3D_4(q)& \Phi_{12}& \ 4& (|T|-1)/4\cr
 \!\tw2F_4(q^2),\ q^2\ge8& \Phi_{24}''& 12& (|T|-1)/12\cr
 F_4(q)& \Phi_{12}& 12& (|T|-1)/12\cr
 E_6(q)& \Phi_9& \ \,9& (|T|-(3,q-1))/9\cr
 \!\tw2E_6(q)& \Phi_{18}& \ \,9& (|T|-(3,q+1))/9\cr
 E_7(q)& \Phi_2\Phi_{14}& 14& (q^7-q)/14\cr
       & \Phi_1\Phi_7& 14& (q^7-q)/14\cr
 E_8(q)& \Phi_{24}& 24& (|T|-1)/24\cr
\end{array}$$
Here $\Phi_8''=q^2+\sqrt{2}q+1$, $\Phi_{12}''=q^2+\sqrt{3}q+1$,
$\Phi_{24}''=q^4+\sqrt{2}q^3+q^2+\sqrt{2}q+1$.
\end{table}

Each regular element in $T$ is conjugate to $|N_G(T):T|$ elements of $T$. Since
torus normalisers control fusion of their elements, we obtain the stated lower
bounds for the number $n_\reg(T)$ of $G$-conjugacy classes of regular semisimple
elements meeting $T$.
\end{proof}

Note that the total number of semisimple classes for $G$ of simply connected
type was shown by Steinberg to be equal to $q^l$, where $l$ is the rank of
$\bG$. With a lot more effort it would be possible to derive estimates for the
number of regular semisimple classes that are asymptotically much closer to
$q^l$.

We note the following easy consequence, which has been used in the proof of
a recent result by Sambale \cite[Thm.~10]{Sa20}:

\begin{cor}   \label{cor:sambale}
 Let $G$ be a simple group of Lie type in characteristic $p$. Then there exist
 at least two conjugacy classes of elements of $G$ with centraliser order prime
 to $p$.
\end{cor}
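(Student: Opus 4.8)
The plan is to recast the statement in terms of regular semisimple elements and then invoke Proposition~\ref{prop:reg elt}. First, if $g\in G$ has $|C_G(g)|$ prime to $p$ then $g$ is semisimple, since otherwise its non-trivial unipotent part lies in $C_G(g)$ and contributes a factor~$p$. Conversely, write $G=\bG^F/Z(\bG^F)$ with $\bG$ simple of simply connected type (the finitely many simple groups not of this shape, among them $\tw2F_4(2)'$, are handled separately below). Any regular semisimple $s\in\bG^F$ has $C_{\bG^F}(s)=\bT^F$ for some maximal torus $\bT$, of order prime to~$p$; and since $Z(\bG^F)\le\bT$, a short computation shows that the image $\bar s\in G$ still has $|C_G(\bar s)|$ dividing $|\bT^F|$, hence prime to~$p$. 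So it suffices to exhibit two $G$-classes arising as images of regular semisimple classes of $\bG^F$.

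Second, one controls how regular semisimple classes can merge in the quotient: two such classes $[s],[s']$ of $\bG^F$ have equal image in $G$ only if $s'$ is $\bG^F$-conjugate to $sz$ for some $z\in Z(\bG^F)$, so each fibre of the induced map on conjugacy classes has size at most $|Z(\bG^F)|$. Hence $G$ has the two required classes whenever Proposition~\ref{prop:reg elt} provides a maximal torus $T\le\bG^F$ with $n_\reg(T)\ge |Z(\bG^F)|+1$. For the classical types the cleanest way to see this is to use the two tori $T_1,T_2$ of Table~\ref{tab:bnd}: by construction $\gcd(|T_1|,|T_2|)=|Z(\bG^F)|$, and the associated Zsigmondy primes $r_i=z_{e_i}(q)$ --- which exist and satisfy $e_i\ge3$ for at least one of $T_1,T_2$ outside a short list of small cases --- are then distinct and coprime to $|Z(\bG^F)|$; thus elements of order $r_1$ in $T_1$ and of order $r_2$ in $T_2$ map to elements of $G$ of these two distinct prime orders, which lie in distinct classes and have centralisers of order prime to~$p$. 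For the exceptional types one argues in the same way with Table~\ref{tab:tori exc} (two tori for $E_7$, one torus with $n_\reg(T)\ge2$ sufficing whenever $|Z(\bG^F)|=1$, and a second Zsigmondy prime in types $E_6$, $\tw2E_6$).

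The only real work is the finite list of small groups not covered by these bounds: $\PSL_2(q)$ with $q\le 9$ (where $\PSL_2(q)$ already has two non-trivial semisimple classes for $q\ge4$), the groups $\PSL_3(2)$, $A_6$, $\PSU_3(3)$, $\Sp_6(2)$, $\PSp_4(3)\cong\PSU_4(2)$, $\Omega_8^\pm(2)$ (the latter five among the exceptions of Proposition~\ref{prop:reg elt}), and a handful of remaining groups over $\FF_2,\FF_3$ of small rank. For each of these the two required classes of elements of order prime to~$p$ with toral, hence $p'$-order, centraliser are read off directly from the known character table --- for instance $\Sp_6(2)$ has two classes of elements of order~$9$ and $\PSU_3(3)$ the classes $7a,7b$ --- which completes the proof. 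I expect this finite case-checking, rather than the main Zsigmondy argument, to be the only obstacle.
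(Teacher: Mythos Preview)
Your proposal is correct and follows essentially the same route as the paper: reduce to regular semisimple classes in $\bG^F$, invoke Proposition~\ref{prop:reg elt} (using $n_\reg(T)>|Z(\bG^F)|$ for exceptional types and the two tori of Table~\ref{tab:bnd} for classical types), and verify the finitely many leftover small groups directly. The only minor difference is that for classical types the paper argues more simply that regular semisimple classes meeting non-conjugate maximal tori stay distinct in $\bG^F/Z(\bG^F)$ (since multiplying by a central element does not change the centraliser), whereas you separate them via their distinct Zsigmondy-prime orders --- a harmless variation that just enlarges your list of small cases to check.
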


\begin{proof}
Let $\bG$ be simple of simply connected type with a Steinberg endomorphism $F$
such that $G=\bG^F/Z(\bG^F)$. This is possible unless $G=\tw2F_4(2)'$, for which
the claim is easily verified. Let $x\in\bG^F$ be regular semisimple. Then its
centraliser is a maximal torus, of order prime to $p$. Thus it suffices to
show that $\bG^F$ has at least two conjugacy classes of regular semisimple
elements which do not have the same image in $G$. For exceptional type groups
$G$ it is immediate that the numbers $n_\reg(T)$ in Table~\ref{tab:tori exc}
are all strictly bigger than $|Z(\bG^F)|$, and so we obtain at least two
regular semisimple $G$-classes. For the groups of classical type, we take
regular semisimple classes coming from the two different types of maximal tori
given in Table~\ref{tab:bnd}. For the exceptions in
Proposition~\ref{prop:reg elt} the claim is again readily verified.
\end{proof}

\begin{thm}   \label{thm:class cross}
 Conjecture~\ref{conj:TV} holds for quasi-simple groups of Lie type for
 non-defining primes $p$.
\end{thm}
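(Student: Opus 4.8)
\emph{Strategy and reductions.} The idea is to exhibit, for each faithful $\theta$, enough ordinary irreducible characters of $p$-defect zero lying over $\theta$: by a classical theorem of Brauer these stay irreducible upon reduction modulo $p$ and, lying in pairwise distinct blocks, remain pairwise distinct, so they contribute their degrees to the sum in Conjecture~\ref{conj:TV}. We may assume $G=\bG^F$ with $\bG$ simple and simply connected, the finitely many exceptional covering groups and $\tw2F_4(2)'$ being settled in \cite[Prop.~2.1]{TV19}. Let $\ell$ be the defining prime, so $q$ is a power of $\ell$ and $p\neq\ell$, and put $Z:=Z(\bG^F)$ and $\widehat Z:=\Irr(Z)$. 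A faithful $\theta\in\Irr(Z/Z_0)$ (where $Z/Z_0$ has $p'$-order) inflates to $\tilde\theta\in\Irr(Z)$ with $\ker\tilde\theta=Z_0$, and inflation along $\bG^F\to\bG^F/Z_0$ identifies $\IBr((\bG^F/Z_0)\,|\,\theta)$ with $\IBr(\bG^F\,|\,\tilde\theta)$ preserving degrees, while $|(\bG^F/Z_0)/(Z/Z_0)|=|\bG^F/Z|$; so it suffices to show
$$\sum_{\vhi\in\IBr(\bG^F\,|\,\tilde\theta)}\vhi(1)^2\ \ge\ |\bG^F/Z|_{p'}\qquad\text{for all }\tilde\theta\in\Irr(Z).$$
If $\bG^F$ has cyclic Sylow $p$-subgroups this follows from Lemma~\ref{lem:cyclic}, and the six groups excluded in Proposition~\ref{prop:reg elt} may be treated from their known character tables; so we assume that neither of these applies.

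\emph{The characters used.} If $s\in\bG^{*F}$ is semisimple, regular, and has connected centralizer $C_{\bG^*}(s)=T^*$ a maximal torus, then the Lusztig series $\mathcal E(\bG^F,s)$ consists of a single character $\chi_s$ (the semisimple character), of degree $|\bG^F|_{\ell'}/|T^{*F}|$; distinct such classes $(s)$ lie in distinct Lusztig series and so give distinct irreducible characters. As $|\bG^F|_\ell$ is prime to $p$ we have $\chi_s(1)_p=|\bG^F|_p/|T^{*F}|_p$, so $\chi_s$ has $p$-defect zero --- hence stays irreducible modulo $p$, with distinct $\chi_s$ staying pairwise distinct as Brauer characters --- precisely when $|T^{*F}|$ is prime to $p$. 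Finally, $\chi_s$ restricts to $Z$ as a multiple of the linear character determined by the image $\bar s$ of $s$ under the canonical surjection $\bG^{*F}\twoheadrightarrow\widehat Z$; every maximal torus of $\bG^{*F}$ maps onto $\widehat Z$, and since $|\widehat Z|=|Z|$ is prime to the Zsigmondy primes $z_e(q)$ of Tables~\ref{tab:bnd}/\ref{tab:tori exc}, the element $\bar s$ depends only on the part of $s$ of order prime to $z_e(q)$.

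\emph{Counting and verification.} Fix $p$. For each type we choose, among the maximal tori $T$ displayed in Tables~\ref{tab:bnd} and~\ref{tab:tori exc}, one of order prime to $p$: for classical types this is possible whenever $p\nmid|Z|$, since by construction the greatest common divisor of the two listed orders is $|Z|$, and the exceptional types are handled analogously, using that a prime dividing the displayed torus order but not $|Z|$ would yield a cyclic Sylow $p$-subgroup. The finitely many primes with $p\mid|Z(G_\SC)|$ (necessarily $p\in\{2,3\}$) either fall under Lemma~\ref{lem:cyclic} or are such that $Z(\bG^F)$ is not a $p'$-group, leaving only the simple adjoint-type quotient, which is dealt with by combining the two tori or by direct computation. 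Identifying the chosen $T$ with a torus $T^*\le\bG^{*F}$ by duality (so $|T^{*F}|$ is prime to $p$ and divisible by $z_e(q)$), the elements of $T^*$ of order divisible by $z_e(q)$ are regular with centralizer $T^*$, and Proposition~\ref{prop:reg elt}, together with the fact that $\bar s$ ignores the $z_e(q)$-part of $s$, provides at least roughly $n_\reg(T)/|\widehat Z|$ such $\bG^{*F}$-classes with $\bar s=\tilde\theta$. Hence
$$\sum_{\vhi\in\IBr(\bG^F\,|\,\tilde\theta)}\vhi(1)^2\ \ge\ \frac{n_\reg(T)}{|\widehat Z|}\left(\frac{|\bG^F|_{\ell'}}{|T^{*F}|}\right)^{\!2},$$
and it remains to verify, type by type, by inserting the explicit torus orders and the lower bounds of the tables, that the right-hand side is at least $|\bG^F/Z|_{p'}$. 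For groups of large rank a single semisimple character already suffices; the function of Proposition~\ref{prop:reg elt} is precisely to deliver the growing-with-$q$ number of characters needed in the small-rank cases.

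\emph{Main obstacle.} The hard part is the regime of small rank and small $q$: there $\chi_s(1)^2$ by itself is smaller than $|G/Z|_{p'}$, so the count of suitable classes has to be estimated sharply; the crude loss factor $|\widehat Z|^{-1}$ for prescribing the central character can no longer be afforded, forcing one to combine the contributions of both tori of Table~\ref{tab:bnd} and to track $2$- and $3$-parts exactly; and a handful of genuinely small groups --- the exceptions of Proposition~\ref{prop:reg elt}, groups like $\PSL_2(q)$, $\PSL_3(q)$, $\PSL_4(q)$ at the prime dividing the relevant Schur multiplier, and the smallest members of the twisted and exceptional families --- have to be checked individually from known character-table data.
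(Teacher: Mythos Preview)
Your overall strategy coincides with the paper's: use the regular semisimple classes from Proposition~\ref{prop:reg elt} in a well-chosen maximal torus to manufacture Deligne--Lusztig characters of (central) $p$-defect, spread them evenly over the central characters via the surjection $T^*\twoheadrightarrow\widehat Z$, and then check the resulting numerical inequality. The reductions (exceptional covers, the six small groups, cyclic Sylow via Lemma~\ref{lem:cyclic}) are also the same.

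There is a minor issue in your handling of $p\mid|Z(\bG^F)|$. The parenthetical ``necessarily $p\in\{2,3\}$'' is false for type $A$ (e.g.\ $\SL_{11}(q)$ with $11\mid q-1$), and ``leaving only the simple adjoint-type quotient'' is also wrong, since any $\bG^F/Z_0$ with $Z_0\ge O_p(Z(\bG^F))$ is admissible. More to the point, when $p\mid|Z(\bG^F)|$ no maximal torus of $\bG^F$ has $p'$-order, because $Z(\bG^F)\le T$; so your requirement that $\chi_s$ have $p$-defect zero in $\bG^F$ cannot be met. The paper instead only asks $|T|_p=|Z(\bG^F)|_p$ (always achievable for one of the two listed tori), so that $\chi_s$ has \emph{central} $p$-defect, and invokes \cite[Thm.~(9.13)]{Na98} to conclude that its $p$-modular reduction is still irreducible.

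The substantive gap is that you never bound $|P|$, and without such a bound the final verification is impossible. The inequality you wish to check rearranges to the paper's $(*)$, namely
\[
n_\reg(T)\ \ge\ \frac{|G|_q\,|T|^2}{|G|_{q'}\,|P|},
\]
whose right-hand side depends on $p$ through $|P|$. Your assertion that ``for groups of large rank a single semisimple character already suffices'' is false: for $\SL_n(q)$ with the Coxeter torus one computes $\chi_s(1)^2/|\PSL_n(q)|\asymp q^{\,1-n}\to0$, so even using all $n_\reg(T)\asymp q^{n-1}/(n+1)$ such characters the left side of Conjecture~\ref{conj:TV} divided by $|G/Z(G)|_{p'}$ is only of order $|P|/(n+1)$. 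The paper closes this by proving $|P|\ge(n+1)^2$ for classical types of rank~$n$ (and analogous bounds for exceptional types), using that Sylow $p$-subgroups are non-cyclic together with the known structure of Sylow subgroups in finite groups of Lie type; this is an essential ingredient that your argument omits.
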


\begin{proof}
By \cite[Prop.~2.1]{TV19} the claim holds for the exceptional covering groups,
thus we may assume that $G$ is a central quotient $G=\bG^F/Z$ of a quasi-simple
group $\bG^F$ of simply connected Lie type. We will construct sufficiently
many irreducible Deligne--Lusztig characters of $G$ of $p$-defect zero  using
Proposition~\ref{prop:reg elt}. First, the assertion is readily checked from
the known Brauer character tables for the six groups listed as exceptions in
that result. So we may assume that $G$ is not a covering group of one of
those. As discussed in the proof of Proposition~\ref{prop:reg elt}, for $G$ of
classical type, the two maximal tori of $\bG^F$ in Table~\ref{tab:bnd} are such
that their images in $\bG^F/Z(\bG^F)$ have coprime orders. Thus, we may choose
at least one of them, say $T$, with $|T|_p=|Z(G)|_p$ for our given prime~$p$.
The same applies to groups of type $E_7$. For the remaining groups of
exceptional type, all elements of $T\setminus Z(\bG^F)$ are regular and the
corresponding Sylow subgroups of $G$ are cyclic. Since by
Lemma~\ref{lem:cyclic} we need not consider primes $p$ for which Sylow
subgroups are cyclic, we again have that $|T|_p=|Z(G)|_p$ for the tori $T$ in
Table~\ref{tab:tori exc}.
\par
Let $\bT\le\bG$ be an $F$-stable maximal torus with $\bT^F=T$, and let
$(\bT^*,\bG^*,F)$ be dual to $\bT,\bG,F)$ (see \cite[Def.~1.5.17]{GM20}), so
$T^*:=\bT^{*F}$ has the same order as $T$. To each regular element $s\in T^*$,
up to $G^*$-conjugation, there exists an irreducible Deligne--Lusztig character
$\pm R_s$ of~$\bG^F$ of degree $|\bG^F:T|_{q'}$ (see \cite[Def.~2.5.17 and
Thm.~2.2.12]{GM20}). Hence, by the choice of $T$, $R_s$ is of central
$p$-defect, that is, the defect of $\chi$ equals the $p$-part of the centre
of $\bG^F$. So the $p$-modular reduction of $R_s$ is irreducible (see e.g.
\cite[Thm.~(9.13)]{Na98}). Now by the character formula
\cite[Prop.~2.2.18]{GM20}, $R_s$ is a faithful character of $G=\bG^F/Z$ with
$Z=\ker(\theta)\cap Z(\bG^F)$, where $s$ is in duality with $(\bT,\theta)$, for
$\theta\in\Irr(T)$. By construction the Zsigmondy prime $r$ for $T$ occurring in
the proof of Proposition~\ref{prop:reg elt} does not divide
$|\bG^{*F}:[\bG^{*F},\bG^{*F}]|$. So we may choose a system of representatives
$R$ for the cosets of $T^*\cap[\bG^{*F},\bG^{*F}]$ in $T^*$ consisting of
elements of order prime to $r$. Thus, if $s\in T^*$ has order divisible by~$r$
then so has $st$, for $t\in R$. It follows that the regular elements in $T^*$ of
order divisible by $r$ are distributed equally across the various cosets of
$T^*\cap[\bG^{*F},\bG^{*F}]$ in $T^*$. Thus, the irreducible Deligne--Lusztig
characters of central $p$-defect constructed above contribute
$$n_\reg(T)/|Z|\cdot|\bG^F:T|_{q'}^2$$
to the sum in Conjecture~\ref{conj:TV}. So, we need to see that
$$n_\reg\ge \frac{|G|_{q}\cdot |T|^2}{|G|_{q'}\cdot|P|},\eqno{(*)}$$
where $P$ denotes a Sylow $p$-subgroup of $G$.

For this, we estimate $|P|$. Since, as pointed out before, we only need to
consider
primes $p$ for which Sylow $p$-subgroups of $G$ are non-cyclic, $p$ divides
at least two (not necessarily distinct) cyclotomic factors occurring in the
order formula for $G$ (see \cite[Thm.~25.14]{MT}). Note that if $p>2$ and $q$
has order $d$ modulo~$p$, then $p\ge d+1$, while for $p=2$, at least one of
$q-1$, $q+1$ is divisible at least by~4. Again first assume that $\bG$ is of
classical type, say of rank~$n$. A Sylow $p$-subgroup $P$ of $G$ has order at
least~$p^2$. As the Weyl group $W$ of $\bG$ contains a symmetric group $\fS_n$,
this shows $|P|\ge(n+1)^2$ if $p$ does not divide the order of $W$. If $p\le n$,
then $|P|\ge p^{\lfloor n/p\rfloor+p}$, which is still at least $(n+1)^2$ unless
$p=2$. But for $p=2$ we have $|P|\ge 2^{\lfloor n/2\rfloor+4}$, and it follows
that $|P|\ge(n+1)^2$ in all cases.

For groups of exceptional type, from the order formula for $G$ (see
\cite[Tab.~24.1]{MT}) it is easy to check that the order of the smallest
non-cyclic Sylow $p$-subgroups is at least~25, and even at least~121 when
$G=E_8(q)$.

Now comparing $(*)$ with the right hand side of the formula in
Conjecture~\ref{conj:TV} we see that our conclusion follows except when $G$ is
one of $\SL_3(2),\Sp_4(2)$, for which the claim is easily checked directly,
or $G=\Sp_{2n}(q)$ or $\Spin_{2n+1}(q)$ with $n=4$ and $q\le5$, $n=6$ and
$q\le4$, $n=8,10$ with $q\le3$ or $(n,q)=(12,2)$, and in all cases, $p$ divides
the order $q^n+1$ of the first torus from Table~\ref{tab:bnd}. Among these, the
only cases in which Sylow $p$-subgroups of $G$ are non-cyclic are for 
$$(n,q,p)\in\{(6,2,5),(6,3,5),(6,4,17),(10,2,5),(10,3,5),(12,2,17)\}.$$
In all these cases except for $(n,p,q)=(6,2,5)$, the desired inequality holds
when we use the exact order of $P$ (which is at least $p^3$ here). Finally, for
$\Sp_{12}(2)$, the ordinary character table allows us to see that the sum of
squares of 5-defect zero characters exceeds the bound.
\end{proof}

\section{Alternating groups}   \label{sec:alt}

It remains to discuss the covering groups of simple alternating groups. At
present we only see how to treat the prime $p=2$.

\begin{thm}   \label{thm:alt}
 Let $G$ be a covering group of the alternating group $\fA_n$, $n\ge5$. Then
 Conjecture~\ref{conj:TV} holds for $G$ at the prime~$2$.
\end{thm}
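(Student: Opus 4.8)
The plan is to reduce to two essentially different situations according to the structure of the covering group and the Brauer characters available in characteristic $2$.

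\textbf{Setup and reductions.} First I would recall that for $p=2$ the Schur multiplier of $\fA_n$ is $\mathbf{Z}/2$ for $n\ne 6,7$ (with the exceptional $3$- and $6$-fold covers for $n=6,7$), so the only case with a nontrivial $2'$-centre is $Z(G)=1$, i.e. $G=\fA_n$ itself; the double covers $2.\fA_n$ have $2$-central centre, so Conjecture~\ref{conj:TV} is vacuous for them (faithful characters of $Z(G)$ of $2'$-order do not exist), and likewise the $3$-fold and $6$-fold covers for $n=6,7$ can be checked directly from known tables, as can $\fA_5,\fA_6,\fA_7,\fA_8$. Thus I may assume $G=\fA_n$ with $n\ge 9$, and I must show
$$\tfrac12\,n!=|\fA_n|_{2'}\le \sum_{\vhi\in\IBr_2(\fA_n)}\vhi(1)^2.$$
Since every $2$-Brauer character of $\fS_n$ restricts to a sum of $2$-Brauer characters of $\fA_n$ of total degree equal to its own, and $|\fS_n|_{2'}=2|\fA_n|_{2'}$, it suffices (a routine Clifford-theory counting argument, as in Proposition~\ref{prop:normal}) to prove the analogous statement for $\fS_n$ at $p=2$; concretely it is enough to exhibit a single irreducible $2$-Brauer character $\vhi$ of $\fS_n$ with $\vhi(1)^2\ge n!_{2'}$, since such a $\vhi$ restricts to $\fA_n$ with the same degree bound.

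\textbf{Producing one large $2$-modular irreducible of $\fS_n$.} The heart of the proof is then to show that $\fS_n$ has an irreducible $2$-Brauer character of degree at least $\sqrt{n!_{2'}}$. The title of the paper (``large character degrees of symmetric groups'') signals the tool: one takes a suitable \emph{basic spin} or, more likely, an ordinary irreducible character $\chi^\lambda$ of $\fS_n$ of very large degree that happens to lie in a block of defect $0$ or small defect for $p=2$, so that its $2$-modular reduction is irreducible (or has a large irreducible constituent). A clean choice is a partition $\lambda$ with empty (or small) $2$-core whose hook-length product has minimal $2$-adic valuation; James--Kerber's hook-length formula $\chi^\lambda(1)=n!/\prod h_{ij}$ shows that minimising $v_2(\prod h_{ij})$ maximises $\chi^\lambda(1)_2$, hence also forces $\chi^\lambda(1)$ to carry almost the full $2'$-part of $n!$. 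One then invokes the known result (Macdonald; or the $2$-adic valuation of $\chi^\lambda(1)$ via the binary expansion of $n$) that there is a partition $\lambda$ of $n$ with $v_2(\chi^\lambda(1))=0$ precisely when $n$ has at most two nonzero binary digits, and in general a partition with $v_2(\chi^\lambda(1))$ bounded by the number of binary digits of $n$ minus one; for such $\lambda$, $\chi^\lambda(1)\ge n!_{2'}/2^{O(\log n)}$, and one checks $\chi^\lambda(1)^2\ge n!_{2'}$ for $n\ge 9$ by comparing with a crude product estimate for $n!_{2'}=\prod_{k\ \text{odd},\ k\le n} k^{\lfloor \log_? \rfloor}$. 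Alternatively, and probably more robustly, I would pick $\lambda$ to be a partition into distinct parts close to the staircase, whose $2$-core is $\emptyset$ or a single box, so that $\chi^\lambda$ lies in a $2$-block of defect $0$ or defect $1$; a defect-$0$ character is already $2$-modularly irreducible of degree exactly carrying $|\fS_n|_2$ (too big, wrong direction), so instead I want defect as close to maximal as possible while staying irreducible mod $2$ --- this is exactly where the delicate input about the $2$-decomposition matrix of $\fS_n$ (James's work, the existence of large irreducible $2$-modular constituents) is needed.

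\textbf{Main obstacle.} The genuinely hard step is guaranteeing that the chosen large-degree ordinary character stays irreducible, or at least has an irreducible $2$-modular constituent of degree $\ge\sqrt{n!_{2'}}$; the $2$-decomposition matrix of $\fS_n$ is not explicitly known in general, so one cannot simply quote it. I expect the paper circumvents this by a global argument: rather than tracking a single character, sum $\sum_{\vhi}\vhi(1)^2$ over all $\vhi\in\IBr_2(\fS_n)$ lying in the principal block (or all blocks of a fixed weight), use $\sum_{\vhi\in\IBr_2(B)}\vhi(1)^2 \ge \dim(B)/|D|$ from the Kiyota--Wada-type bound already cited in Lemma~\ref{lem:cyclic} when $D$ is small, combined with $\sum_B \dim B = |\fS_n|$, to get $\sum_{\vhi}\vhi(1)^2\ge |\fS_n|/\max_B|D_B|$; then one only needs a good \emph{upper} bound on the largest $2$-defect group of a block of $\fS_n$, which is $2^{\lfloor n/2\rfloor + \lfloor n/4\rfloor+\cdots}=|\fS_n|_2$ for the principal block --- too weak. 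So the real content must be selecting a \emph{union of blocks} $\mathcal B$ whose defect groups are all small (blocks of small $2$-weight $w$, with $|D|=|\fS_w\wr$-Sylow$|$ tiny) while still $\sum_{B\in\mathcal B}\dim B$ is a positive fraction of $|\fS_n|$, and verifying the resulting inequality; controlling that trade-off between total dimension captured and defect-group size is the crux, and I would expect several pages of case analysis on the binary expansion of $n$ together with explicit small-$n$ checks for $9\le n\le$ some bound.
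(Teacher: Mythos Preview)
Your reductions are correct and match the paper: the $2$-fold cover has $2$-central centre so is vacuous for Conjecture~\ref{conj:TV}, the exceptional covers of $\fA_6,\fA_7$ are handled by hand, and one reduces via Proposition~\ref{prop:normal} to finding a single $\vhi\in\IBr_2(\fS_n)$ with $\vhi(1)^2\ge n!_{2'}$.

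But your main step has a genuine gap. Both strategies you sketch fail for the reasons you yourself identify: for ordinary $\chi^\lambda$ of $\fS_n$ the $2$-decomposition matrix is not known, so you cannot control irreducibility mod~$2$; and the Kiyota--Wada block argument collapses because the principal block has full defect. The paper avoids this entirely by a trick you mention in passing and then drop: it uses \emph{spin} characters, i.e.\ faithful ordinary characters of the double cover $2.\fS_n$. Since $Z(2.\fS_n)$ is a $2$-group, one has $\IBr_2(2.\fS_n)=\IBr_2(\fS_n)$, so the $2$-modular reduction of a spin character, when irreducible, gives an honest $2$-Brauer character of $\fS_n$. The crucial external input is Fayers' 2018 theorem, which gives an explicit combinatorial criterion for a spin character to remain irreducible modulo~$2$. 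The paper then exhibits two families of staircase partitions into distinct odd parts,
\[
(4l-3,4l-7,\ldots,1)\vdash l(2l-1)\quad\text{and}\quad(4l-1,4l-5,\ldots,3)\vdash l(2l+1),
\]
checks that Fayers' criterion applies, and proves by a Stirling-type induction on $l$ that the squares of their (bar-hook-formula) degrees exceed the required $n!_{2'}$ over the intervening ranges of $n$; small $n$ are done directly. So the decisive idea you are missing is not a clever choice of ordinary $\chi^\lambda$ or a block-counting trade-off, but passing to $2.\fS_n$ together with Fayers' irreducibility criterion for spin characters.
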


\begin{proof}
By Proposition~\ref{prop:normal} it is sufficient to consider the covering
groups of the symmetric groups. The exceptional covering groups of $\fA_6$ and
$\fA_7$ are easily checked, so we may assume that $n\ge8$ and so $Z(G)=1$, that
is $G=\fS_n$. We will show that for large enough $n$ there exists a single
irreducible 2-Brauer character whose degree satisfies the desired inequality.
In fact, as for the groups of Lie type, it is the 2-modular reduction of an
ordinary character, namely, a faithful character of the 2-fold cover $2.\fS_n$,
but not of 2-defect zero.   \par
For $l\ge1$ consider the two partitions
$$\begin{aligned}
  p_1&=(4l-3,4l-7,\ldots,1)\vdash n_{l,1}:=l(2l-1),\\
  p_2&=(4l-1,4l-5,\ldots,3)\vdash n_{l,2}:=l(2l+1).
\end{aligned}$$
These label irreducible characters $\chi_l^i$ of the 2-fold cover
$2.\fS_{n_{l,i}}$ of $\fS_{n_{l,i}}$. Their degree is given by the analogue of
the hook length formula (see \cite[Thm.~2.8]{Fa18}): for a partition
$\la=(\la_1,\la_2,\ldots,\la_m)\vdash n$ with distinct parts $\la_i$, the degree
of the corresponding spin character of $2.\fS_n$ equals
$$2^{\lfloor(n-m)/2\rfloor}\frac{n!}{\prod_i\la_i!}\cdot
  \prod_{i<j}\frac{\la_i-\la_j}{\la_i+\la_j}.$$
From this one easily computes that
$$\frac{\chi_{l+1}^1(1)}{\chi_l^1(1)}
  =2^{4l-1}\binom{n_{l+1,1}}{4l+1}\binom{4l-1}{2l}^{-1}\quad\text{and}\quad
 \frac{\chi_{l+1}^2(1)}{\chi_l^2(1)}
  =2^{4l+1}\binom{n_{l+1,2}}{4l+3}\binom{4l+1}{2l+1}^{-1}.$$
We claim that for $l\ge8$ we have
$$\chi_l^1(1)^2 \ge (n_{l,2}-1)!_{2'}\qquad\text{and}\qquad
  \chi_l^2(1)^2 \ge (n_{l+1,1}-1)!_{2'}\,.\eqno{(*)}
$$
Assume that the inequalities have already been shown up to $l$. Now 
$$\binom{4l-1}{2l}=\frac{1}{2}\binom{4l}{2l}\le 2^{4l-1}/\sqrt{4l}\quad\text{and}\quad
  \binom{4l+1}{2l+1}=\frac{1}{2}\binom{4l+2}{2l+1}\le 2^{4l+1}/\sqrt{4l+2},$$
by the standard estimate for the middle binomial coefficient, and so
$$\chi_{l+1}^1(1)^2\ge 4l\binom{n_{l+1,1}}{4l+1}^2(n_{l,2}-1)!_{2'},
  \qquad\chi_{l+1}^2(1)^2\ge(4l+2)\binom{n_{l+1,2}}{4l+3}^2(n_{l+1,1}-1)!_{2'}$$
by our inductive assumption. Further, with $n:=n_{l+1,1}$ we have
$$\begin{aligned}
  \binom{n}{4l+1}
  &\ge c_1\Big(\frac{n}{4l+1}\Big)^{4l+1}\Big(\frac{n}{n-4l-1}\Big)^{n-4l-1}
  \sqrt{\frac{n}{2\pi(4l+1)((n-4l-1)}}\\
  &\ge c_1\sqrt\frac{1}{2\pi(4l+1)}\Big(1+\frac{4l+1}{n-4l-1}\Big)^{n-4l-1}
    \Big(\frac{n}{4l+1}\Big)^{4l+1}\\
  &\ge c_2\sqrt\frac{1}{2\pi(4l+1)}\ e^{4l+1}\Big(\frac{n}{4l+1}\Big)^{4l+1}
\end{aligned}$$
for some constants $c_1,c_2\ge0.5$ independent of $l$ by a well-known estimate
for binomial coefficients. On the other hand by Stirling's formula
$$\begin{aligned}
  \frac{(n+2l+1)!}{(n-2l-2)!}
  &\le c_3 \frac{\sqrt{2\pi(n+2l+1)}}{\sqrt{2\pi(n-2l-2)}}\cdot
  \frac{(n+2l+1)^{n+2l+1}\,e^{n-2l-2}}{(n-2l-2)^{n-2l-2}\,e^{n+2l-1}}\\
  &=c_4 e^{-4l-3}\Big(1+\frac{4l+3}{n-2l-2}\Big)^{n-2l-2}(n+2l+1)^{4l+3}\\
  &\le c_4(n+2l+1)^{4l+3},
\end{aligned}$$
where again $c_3,c_4\le2$ are independent of $l$. Putting things together,
this shows that
$$\frac{\chi_{l+1}^1(1)^2}{(n_{l+1,2}-1)!_{2'}}
  \ge \frac{c_2^2}{c_4} 4l\frac{e^{8l+2}}{2\pi(4l+1)}\Big(\frac{n}{4l+1}\Big)^{8l+2}
  \Big(\frac{1}{n+2l+1}\Big)^{4l+3}\frac{(n_{l+1,2}-1)!_2}{(n_{l,2}-1)!_2}.
$$
Now, using that $n=n_{l+1,1}=(l+1)(2l+1)$ we find
$$\frac{n^2}{(4l+1)^2(n+2l+1)}\ge \frac{1}{8},$$
and since $(n_{l+1,2}-1)!_2/(n_{l,2}-1)!_2\ge 2^{4l+3}/(4l+3)$, 
$$\frac{\chi_{l+1}^1(1)^2}{(n_{l+1,2}-1)!_{2'}}
  \ge c_5 \frac{4l}{(4l+1)(4l+3)} \Big(\frac{1}{n+2l+1}\Big)^{2}
  \Big(\frac{e^2}{4}\Big)^{4l-1}.$$
The last term on the right hand side clearly dominates when $l\to\infty$ and so
the left hand side is eventually bigger than~1. A more
precise estimate and checking the first 50 values by computer then completes
the proof of our claim for $\chi_l^1$. A very similar computation shows~(*)
for $\chi_l^2$.
\par
The desired assertion then follows: by a result of Fayers \cite[Thm.~3.3]{Fa18},
both $\chi_l^1$ and $\chi_l^2$ remain irreducible modulo~2. So for all
$n\ge n_{l+1,i}$, with $l\ge8$, there exists an irreducible 2-Brauer character
of $\fS_n$ of degree at least $\chi_{l+1}^i(1)$. By (*), $\chi_{l+1}^1(1)$
satisfies the inequality from Conjecture~\ref{conj:TV} for $\fS_n$ for all
$n_{l+1,1}\le n\le n_{l+1,1}+2l+1=n_{l+1,2}-1$, and similarly,
$\chi_{l+1}^2(1)$ satisfies our inequality for $\fS_n$ whenever
$n_{l+1,2}\le n\le n_{l+2,1}-1$. Finally, for $l<8$, that is, for $n<120$ it is
easily checked using the
criterion in \cite[Thm.~3.3]{Fa18} that there always is an irreducible
2-Brauer character with the desired property, labelled by a sum of one of
our partitions $\la_{l,1}$ or $\la_{l,2}$ and a suitable Carter partition.
\end{proof}

It should be noted that we need to use both characters $\chi_l^1$ and $\chi_l^2$
for our approach to work. 

For odd primes, the combinatorics seem more daunting, and furthermore, there
are two quite different cases to consider, corresponding to faithful and
non-faithful characters of $2.\fS_n$. Note that by Lemma~\ref{lem:cyclic}, we may
assume that $p\le n/2$ for $\fS_n$ and $2.\fS_n$.
\medskip

Our Main Theorem~\ref{thm:main} now follows by combining
Proposition~\ref{prop:class def}, Theorem~\ref{thm:class cross} and
Theorem~\ref{thm:alt} with the result of Tong-Viet \cite[Prop.~2.1]{TV19} for
sporadic groups.



\begin{thebibliography}{131}

\bibitem{Fa18}
{\sc M. Fayers}, Irreducible projective representations of the symmetric group
  which remain irreducible in characteristic~$2$. \emph{Proc. Lond. Math. Soc.
  (3) \bf116} (2018), 878--928. 

\bibitem{GM20}
{\sc M. Geck, G. Malle}, \emph{The Character Theory of Finite Groups of Lie
  Type: A Guided Tour}. Cambridge University Press, Cambridge, 2020.

\bibitem{HW07}
{\sc T. Holm, W. Willems}, A local conjecture on Brauer character degrees of
  finite groups. \emph{Trans. Amer. Math. Soc. \bf359} (2007), 591--603. 

\bibitem{KW93}
{\sc M. Kiyota, T. Wada}, Some remarks on eigenvalues of the Cartan matrix in
  finite groups. \emph{Comm. Algebra \bf21} (1993), 3839--3860.

\bibitem{Lue01}
{\sc F. L\"ubeck}, Small degree representations of finite Chevalley groups in
  defining characteristic. \emph{LMS J. Comput. Math. \bf4} (2001), 135--169. 

\bibitem{MT}
{\sc G. Malle, D. Testerman}, \emph{Linear Algebraic Groups and Finite Groups
  of Lie Type}. Cambridge Studies in Advanced Mathematics, 133. Cambridge
  University Press, Cambridge, 2011.

\bibitem{Ma05}
{\sc J. Maslowski}, \emph{Evidenz zur Willemsschen Vermutung \"uber
  Brauercharaktergrade}. Diplomarbeit, Universit\"at Kassel, 2005.

\bibitem{Na98}
{\sc G. Navarro}, \emph{Characters and Blocks of Finite Groups}. Cambridge
  University Press, Cambridge, 1998.

\bibitem{Pr87}
{\sc A. A. Premet}, Weights of infinitesimally irreducible representations of
  Chevalley groups over a field of prime characteristic. \emph{Mat. Sb. (N.S.)
  \bf133(175)} (1987), 167--183, 271.

\bibitem{Sa20}
{\sc B. Sambale}, Character tables and defect groups. \emph{J. Algebra \bf562}
  (2020), 323--340.

\bibitem{TV19}
{\sc H. Tong-Viet}, Some conjectures on Brauer character degrees. \emph{J.
  Algebra \bf550} (2020), 210--218.

\bibitem{Wi88}
{\sc W. Willems}, Blocks of defect zero in finite simple groups of Lie type.
  \emph{J. Algebra \bf113} (1988), 511--522.

\bibitem{Wi05}
{\sc W. Willems}, On degrees of irreducible Brauer characters. \emph{Trans.
  Amer. Math. Soc. \bf357} (2005), 2379--2387. 

\end{thebibliography}
\end{document}